\newtheorem{theorem}{Theorem}[section]
\newtheorem{proposition}[theorem]{Proposition}
\newtheorem{corollary}[theorem]{Corollary}
\theoremstyle{definition}
\newtheorem{definition}[theorem]{Definition}
\newtheorem{remark}[theorem]{Remark}
\numberwithin{equation}{section}
\def \dis {\displaystyle}
\def \R {\mathbb{R}}
\def \N {\mathbb{N}}
\def \H {\mathbb{H}}
\def \C {\mathcal{C}}
\def \V {\mathbb{ V}}
\def \H {\mathbb{ H}}
\def \C {{\mathcal C}}
\def \dx{\mathrm{d}x}
\def \dy {\mathrm{d}y}
\def \dq {\mathrm{d}x \, \mathrm{d}t}
\def\Omc{\mathbb{R}^N\setminus\Omega}
\def \ro {\rho}
\def \hvarphi \widehat{\varphi}
\def\RR{{\mathbb{R}}}
\def\bOm{\overline{\Om}}
\def\Om{\Omega}
\keywords{Fractional Laplacian, maximum principle.}
\subjclass[2010]{ 26A33, 35B50, 46E35, 49K20}
\begin{document}
	\title{On a weak maximum principle for a class of fractional diffusive equations}
	
	\author{Cyrille Kenne}
\address{Cyrille Kenne,  Laboratoire L.A.M.I.A., D\'{e}partement de Math\'{e}matiques et Informatique, Universit\'{e} des Antilles, Campus Fouillole, 97159 Pointe-\`{a}-Pitre,(FWI), Guadeloupe -
Laboratoire L3MA, UFR STE et IUT, Universit\'e des Antilles, Schoelcher, Martinique.}
\email{kenne853@gmail.com}
	
	\author{Gis\`{e}le Mophou}
\address{Gis\`{e}le Mophou, Laboratoire L.A.M.I.A., D\'{e}partement de Math\'{e}matiques et Informatique, Universit\'{e} des Antilles, Campus Fouillole, 97159 Pointe-\`{a}-Pitre,(FWI), Guadeloupe -
Laboratoire  MAINEGE, Universit\'e Ouaga 3S, 06 BP 10347 Ouagadougou 06, Burkina Faso.}
\email[Mophou]{gisele.mophou@univ-antilles.fr}

	\date{\today}
	
	\maketitle
	
\begin{abstract}
We consider two evolution equations involving space fractional Laplace operator of order $0<s<1$. We first establish some existence and uniqueness results for the considered evolution equations. Next, we give some comparison theorems and prove that, if the data of each equation are data bounded, then the solutions are also bounded.
\end{abstract}
	
\section{Introduction}
In this paper, we denote by $\Om\subset\R^N,\, N\in \N\setminus\{0\}$ an open bounded domain with a Lipschitz continuous boundary $\partial \Om$ and $\omega$ an open subset of $\Omega.$ For $T>0$, we set $Q:= \Omega\times (0,T)$ and $\Sigma:= (\R^N\setminus \Omega)\times (0,T)$. We  consider the space fractional diffusion model with non-homogeneous Robin boundary condition
\begin{equation}\label{modeln}
\left\{
\begin{array}{rllll}
\dis  \rho_t+(-\Delta)^s\rho &=&f \qquad &\mbox{in}& Q,\\
\mathcal{N}_s\rho+\rho&=&g  &\mbox{in}& \Sigma,\\
\rho(.,0)&=& \rho_0 &\mbox{in}& \Omega,
\end{array}
\right.
\end{equation}
and the space fractional diffusion model with homogeneous Dirichlet boundary condition
\begin{equation}\label{modeld}
\left\{
\begin{array}{rllll}
\dis  \rho_t+(-\Delta)^s\rho &=&f \qquad &\mbox{in}& Q,\\
\rho&=&0  &\mbox{in}& \Sigma,\\
\rho(.,0)&=& \rho_0 &\mbox{in}& \Omega,
\end{array}
\right.
\end{equation}
where $T>0$, the functions   $\rho_0\in L^2(\Omega)$, $f\in L^2(Q)$ and $g\in L^2(\Sigma)$. $(-\Delta)^s$ ($0<s<1$) denotes the fractional Laplace operator given formally for a suitable function $\psi$ by:
\begin{align*}
(-\Delta)^s\psi=C_{N,s}\,\mbox{P.V.}\;\int_{\RR^N}\frac{\psi(x)-\psi(y)}{|x-y|^{N+2s}}\;dy,
\end{align*}
where P.V denotes the principal value and $C_{N,s}$ is a normalization constant depending only on $N$ and $s$. We refer to Section \ref{prelim} for more details.
We aim to establish the weak maximum principle results for the systems \eqref{modeln}-\eqref{modeld}.\par
Recently, the fractional calculus has become increasingly considered by researchers. It has been used for many application, for instance, modeling of  chemical and physical process, in engineering and finance (see e.g \cite{chechkin2005, freed2002, hilfer2000, kilbas2006, metzler2000, milchev2009, podlubny1998} for instance and the references therein).
With this emergence of fractional calculus, the maximum principles results have gained much attention.
Theses techniques are used to obtain more information about a solution even if we do not have its explicit expression.
They are also of a good use to prove existence and uniqueness results for partial differential equations.
They can also be used  in control theory to prove the uniqueness of an optimal control, specially for bilinear systems.\par
The maximum principles for fractional order has been widely concerned by authors. In \cite{luchko2009},
Luchko formulated and established a maximum principle result for a time-fractional diffusion equation.
It was done by using an extremum principle established earlier for the Caputo-Dzherbashyan fractional derivative.
This result was used later in \cite{luchk2010}, to prove well posedness results of a initial-boundary-value problem
for the multi-dimensional time-fractional diffusion equation. Al-Refai \textit{et al}. \cite{al2014}
established maximum principles for a fractional diffusion equation involving the Riemann-Liouville fractional derivative.
They employed these results to show an uniqueness result for solutions to a initial-boundary-value problem for
the nonlinear fractional diffusion equation under some standard assumptions.\par

We point out that in the above papers, there is no consideration of the fractional Laplace operator. But, this latter is a non local operator and has found applications in many areas see \cite{caffarelli2010}, for instance. For more topics on fractional Laplace and its applications, we refer to \cite{ barrios2012,  brandle2013, cabre2014,caffarelli2008, silvestre2007, viet2019} and the references therein.
Concerning maximum principle involving fractional  operator, Wang \textit{et al.} \cite{wang2020} established maximum principles results for Hadamard fractional differential equation with fractional Laplace operator. They applied this result to a linear and nonlinear Hadamard fractional equations in order to prove the uniqueness and continuous dependence of the solution to the initial-boundary-value problem.

More recently, Barrios \textit{et al.} \cite{musina2019}, studied comparison results for a class of elliptic and
parabolic problems involving the fractional Laplace operator. They proved a generalization of a Hopf's lemma in a non-local form for elliptic and parabolic problems with mixed conditions. Trong \textit{et al.} \cite{trong2021}, proved a strong maximum principle result for the spectral Dirichlet Laplacian. The result was obtained by using the  Poincar\'e's inequality in a fractional version and by means of a certain truncated function.

In this article, we study two evolution equations involving space fractional Laplace operator. After some well posedness results of our systems, we establish weak maximum principles.
The rest of the work is organized as follows.  We introduce some Sobolev spaces and recall some known results on Fractional Laplace operator in Section \ref{prelim}. In Section \ref{existe}, the existence and uniqueness of solutions to systems \eqref{modeln}-\eqref{modeld} is established. We derive maximum principles results in Section \ref{maximum}. In Section \ref{conclusion}, a conclusion is given.
\section{Preliminaries}
\label{prelim}
We start this section by recalling the definition of the fractional Laplace operator. Given $0<s<1$,  we let the space
\begin{align*}
\mathcal L_s^{1}(\RR^N)\coloneqq \left\{u:\RR^N\to\RR\;\mbox{ measurable and}\; \int_{\RR^N}\frac{|u(x)|}{(1+|x|)^{N+2s}}\;\dx<\infty\right\}.
\end{align*}
Let $w\in \mathcal L_s^{1}(\RR^N)$ and $\varepsilon>0$, we set
\begin{align*}
(-\Delta)_\varepsilon^s w(x)\coloneqq C_{N,s}\int_{\{y\in\RR^N:\;|x-y|>\varepsilon\}}\frac{w(x)- w(y)}{|x-y|^{N+2s}}\;\dy,\;\;x\in\RR^N,
\end{align*}
where $C_{N,s}$, normalization constant is given by
\begin{align}\label{CNs}
C_{N,s}\coloneqq \frac{s2^{2s}\Gamma\left(\frac{2s+N}{2}\right)}{\pi^{\frac{N}{2}}\Gamma(1-s)}.
\end{align}
We define the fractional Laplace operator $(-\Delta)^sw$ as the following singular integral:
\begin{equation}\label{fl_def}
(-\Delta)^sw(x)\coloneqq C_{N,s}\,\mbox{P.V.}\int_{\RR^N}\frac{w(x)-w(y)}{|x-y|^{N+2s}}\;\dy =
\lim_{\varepsilon\downarrow 0}(-\Delta)_\varepsilon^s w(x),\;\;x\in\RR^N,
\end{equation}
provided that the limit exists for a.e. $x\in \R^N$. We refer to \cite{NPV} and the references therein for the class of functions for which the  limit in \eqref{fl_def} exists for a.e. $x\in\RR^N$.\par

Let $\Omega\subset\RR^N$ ($N\ge 1$) be an arbitrary open set and $0<s<1$. We define the fractional order Sobolev space
	$$
	H^s(\Omega):=\left\{\rho\in L^2(\Omega):\; \int_{\Omega}\int_{\Omega}\frac{|\rho(x)-\rho(y)|^2}{|x-y|^{N+2s}}\;dxdy<\infty\right\}
	$$
	and we endow it with the norm given by
	$$
	\|\rho\|_{H^s(\Omega)}=\left(\int_{\Omega}|\rho|^2\;dx+\int_{\Omega}\int_{\Omega}\frac{|\rho(x)-\rho(y)|^2}{|x-y|^{N+2s}}\;dxdy\right)^{1/2}.
	$$
We introduce for any  $\rho\in H^{s}(\RR^N)$ the {\em nonlocal normal derivative $\mathcal N_s$} defined by
	\begin{equation}\label{NonlocalDeri}
	\mathcal N_{s}\rho(x)\coloneqq C_{N,s}\int_{\Omega}\frac{\rho(x)-\rho(y)}{|x-y|^{N+2s}}\; \dy,~~~~x\in\RR^N\setminus\bOm,
	\end{equation}
	where $C_{N,s}$ is the normalization constant given in \eqref{CNs}.  We have that $\mathcal N_{s}$ maps $\rho\in H^{s}(\RR^N)$ continuously into $L^2_{loc}(\RR^N\setminus\bOm)$ (see \cite[Lemma 2.5, page 1604]{claus2020realization}).\\
We have the integration by parts formula which can be found in \cite{Dipierro2017, War-ACE} for smooth functions. Let $\rho\in H^{s}(\RR^N)$ be such that $(-\Delta)^s \rho\in L^2(\Omega)$ and $\mathcal N_s\rho\in L^2(\Omc)$. Then for every $\psi\in H^{s}(\RR^N)$, the following equality
	\begin{align}\label{Int-Part}
	\frac{C_{N,s}}{2}\int\int_{\RR^{2N}\setminus(\Omc)^2}&
	\frac{(\rho(x)-\rho(y))(\psi(x)-\psi(y))}{|x-y|^{N+2s}}\;\dx\, \dy\notag\\
	=&\int_{\Omega}\psi(-\Delta)^s\rho\;\dx+\int_{\Omc}\psi\mathcal N_s \rho\;\dx,
	\end{align}
	holds.\par

	Hence, if
	$\rho,\,\psi\in H^{s}(\RR^N)$ with  $(-\Delta)^s \rho,\, (-\Delta)^s \psi\in L^2(\Omega)$ and $\mathcal N_s\rho,\, \mathcal N_s\psi\in L^2(\Omc)$. Then  the following identity holds,
	\begin{equation}\label{Int-Part2}
	\begin{array}{llll}
	\dis \int_{\Omega}\psi(-\Delta)^s\rho\;\dx&=&\dis \int_{\Omega}\rho(-\Delta)^s\psi\;\dx
	+\dis \int_{\Omc}\rho\mathcal N_s \psi\;\dx\\
	&-&\dis\int_{\Omc}\psi\mathcal N_s \rho\;\dx.
	\end{array}
	\end{equation}
	It is clear that if $\rho=0$ in $\Omc$ or $\psi=0$ in $\Omc$, then
	$$
	\int\int_{\RR^{2N}\setminus(\RR^N\setminus\Omega)^2}
	\frac{(\rho(x)-\rho(y))(\psi(x)-\psi(y))}{|x-y|^{N+2s}}\dx\,\dy=
	\int_{\R^N}\int_{\R^N}\frac{(\rho(x)-\rho(y))(\psi(x)-\psi(y))}{|x-y|^{N+2s}}\dx\,\dy,
	$$
	and the integration by part formula \eqref{Int-Part} becomes
	\begin{equation}\label{Int-Part0}
	\frac{C_{N,s}}{2}\int\int_{\RR^{2N}\setminus(\Omc)^2}
	\frac{(\rho(x)-\rho(y))(\psi(x)-\psi(y))}{|x-y|^{N+2s}}\;\dx\, \dy
	=\int_{\Omega}\psi(-\Delta)^s\rho\;\dx\quad \forall \rho,\psi\in H^{s}(\RR^N)
	\end{equation}

We set
	$$
	\H_0^{s}(\Omega):=\Big\{\rho\in H^s(\R^N):\;\rho=0\;\hbox{ in }\;\RR^N\setminus\Omega\Big\}.
	$$
	Then,  endowed with the norm
	\begin{equation}\label{norm0}
	\|\rho\|_{\H_0^{s}(\Omega)}:=\left(\frac{C_{N,s}}{2}\int_{\R^N}\int_{\R^N}
	\frac{(\rho(x)-\rho(y))^2}{|x-y|^{N+2s}}\;\dx\, \dy\right)^{1/2},
	\end{equation}
	$\H_0^{s}(\Omega)$  is a Hilbert space (see e.g. \cite[Lemma 7]{servadei}). Let  $\H^{-s}(\Omega):=(\H_0^s(\Omega))^\star$ be  the dual space of $\H_0^s(\Omega)$ with respect to the pivot space $L^2(\Omega)$. Then we have the following continuous embedding (see e.g. \cite{ATW}):
	\begin{equation}\label{injection1}
	\H_0^{s}(\Omega)\hookrightarrow L^2(\Omega)\hookrightarrow \H^{-s}(\Omega).
	\end{equation}	
	From now on,  for any $\rho,\psi\in \H_0^{s}(\Omega)$, we set
	\begin{equation}\label{defF}
	\mathcal{F}(\rho,\psi):= \frac{C_{N,s}}{2} \int_{\R^N}\int_{\R^N}
	\frac{(\rho(x)-\rho(y))(\psi(x)-\psi(y))}{|x-y|^{N+2s}}\;\dx\, \dy.
	\end{equation}
	Therefore,  the norm on $\H_0^{s}(\Omega)$ given by \eqref{norm0} becomes
	$\|\rho\|_{\H_0^{s}(\Omega)}=\left(\mathcal{F}(\rho,\rho)\right)^{1/2}$.\par
Let  $(-\Delta)_D^s$ be the operator defined on $L^2(\Omega)$  by
	\begin{equation}\label{FDL}
	D((-\Delta)_D^s):=\{\rho\in \H_0^s(\Omega):\;(-\Delta)^s\rho\in L^2(\Omega)\},\; (-\Delta)_D^s\rho:=(-\Delta)^s\rho\;\hbox{ in }\Omega.
	\end{equation}
	Then,  $(-\Delta)_D^s$ is the realization in $L^2(\Omega)$ of $(-\Delta)^s$ with the zero Dirichlet exterior condition.\par
The following result is well-known (see e.g.  \cite{claus2020realization,GW-CPDE}).
	\begin{proposition}\label{Prop-22}
		Let $(-\Delta)_D^s$ be the operator defined in \eqref{FDL}.
		Then, $(-\Delta)_D^s$ can be also viewed as a bounded operator from $H_0^s(\Omega)$ into $H^{-s}(\Omega)$ given by
		\begin{align}\label{FDLB}
		\langle (-\Delta)_D^s\rho,\varphi\rangle_{H^{-s}(\Omega),H_0^s(\Omega)}=\mathcal F(\rho,\varphi),\;\;\; \rho,\varphi\in \H_0^s(\Omega).
		\end{align}
\end{proposition}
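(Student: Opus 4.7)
The plan is to take the bilinear form $\mathcal{F}$ itself as the definition of the extended operator, and then to verify via the integration by parts identity \eqref{Int-Part0} that this extension coincides with the $L^2(\Omega)$-realization $(-\Delta)_D^s$ on its natural domain $D((-\Delta)_D^s)$.

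First I would establish the continuity of $\mathcal{F}$ on $\H_0^s(\Omega)\times \H_0^s(\Omega)$. Viewing the integrand as a pointwise product in the weighted Lebesgue space $L^2\bigl(\R^N\times\R^N,\, |x-y|^{-N-2s}\,\dx\, \dy\bigr)$ and applying the Cauchy--Schwarz inequality yields
\begin{equation*}
|\mathcal{F}(\rho,\varphi)| \le \|\rho\|_{\H_0^s(\Omega)}\,\|\varphi\|_{\H_0^s(\Omega)}, \qquad \rho,\varphi\in \H_0^s(\Omega),
\end{equation*}
directly from the definition \eqref{norm0} of the norm on $\H_0^s(\Omega)$. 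Consequently, for each fixed $\rho\in \H_0^s(\Omega)$, the map $\varphi\mapsto \mathcal{F}(\rho,\varphi)$ is a bounded linear functional on $\H_0^s(\Omega)$, hence it represents a unique element $A\rho\in H^{-s}(\Omega)$ with $\|A\rho\|_{H^{-s}(\Omega)}\le \|\rho\|_{\H_0^s(\Omega)}$. The induced map $A: H_0^s(\Omega)\to H^{-s}(\Omega)$ is then linear and bounded.

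Next I would verify that $A$ extends $(-\Delta)_D^s$. Fix $\rho\in D((-\Delta)_D^s)$, so that $\rho\in \H_0^s(\Omega)$ and $(-\Delta)^s\rho\in L^2(\Omega)$, and take an arbitrary $\varphi\in \H_0^s(\Omega)$. Since both $\rho$ and $\varphi$ vanish on $\Omc$, the integration by parts identity \eqref{Int-Part0} applies and gives
\begin{equation*}
\mathcal{F}(\rho,\varphi) = \int_{\Omega}\varphi\,(-\Delta)^s\rho\;\dx.
\end{equation*}
By the continuous embedding $L^2(\Omega)\hookrightarrow H^{-s}(\Omega)$ stated in \eqref{injection1}, the right hand side is precisely the duality pairing $\langle (-\Delta)^s\rho,\varphi\rangle_{H^{-s}(\Omega),H_0^s(\Omega)}$. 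Therefore $A\rho=(-\Delta)_D^s\rho$ on $D((-\Delta)_D^s)$, which justifies denoting the bounded extension again by $(-\Delta)_D^s$ and gives exactly the identity \eqref{FDLB}.

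I do not expect a substantive obstacle: the analytic content is packaged in \eqref{Int-Part0} and in the boundedness of $\mathcal{F}$. The only point requiring a line of care is invoking the integration by parts formula in the correct setting, namely checking that the boundary contributions involving $\mathcal{N}_s$ that appear in the general identity \eqref{Int-Part} vanish for $\rho,\varphi\in \H_0^s(\Omega)$; this is immediate from the support condition $\rho=\varphi=0$ in $\Omc$, which is precisely the hypothesis under which \eqref{Int-Part0} is derived.
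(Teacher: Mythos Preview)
Your argument is correct and is the standard one. Note, however, that the paper does not actually supply a proof of this proposition: it introduces the statement with ``The following result is well-known (see e.g.\ \cite{claus2020realization,GW-CPDE})'' and moves on. So there is no in-paper proof to compare your approach against; what you have written is precisely the kind of short verification one would expect to find in the cited references, and it fits cleanly with the tools already assembled in Section~\ref{prelim} (the norm \eqref{norm0}, the embedding \eqref{injection1}, and the integration by parts identity \eqref{Int-Part0}).
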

\begin{remark}\label{rmk0}
	$ $
	Note that  if $N>2s,$ then   there is a positive constant $C_0=C(N,s)>0$ such that for any $\rho \in \H_0^s(\Omega)$,
	\begin{equation}\label{normeq}
	\|\rho\|_{L^r(\Omega)}\leq C_0\|\rho\|_{\H_0^s(\Omega)},
	\end{equation}
	where $r=\dis \frac{2N}{N-2s}$ (see e.g. \cite[Lemma 6 a)]{servadei}). Therefore,
	\begin{equation}\label{normeq1}
	\|\rho\|_{L^2(\Omega)}\leq C_0\|\rho\|_{\H_0^s(\Omega)},
	\end{equation}
	because $\Omega$ is an open and bounded subset of $\R^N$ and $r>2$. Moreover, the following holds true,
	\begin{equation}\label{normeq1bis}
	\|\rho\|_{L^2((0,T);L^2(\Omega))}\leq C_0\|\rho\|_{L^2((0,T);\H_0^s(\Omega))}.
	\end{equation}
\end{remark}

Next, let $0<s<1$, we consider the space
\begin{equation}\label{defHS}
\dis \H^{s}_{\Omega}\coloneqq \left\{\rho:\R^N\to \R \hbox{ measurable  such that   } \|\rho\|_{\H^{s}_{\Omega}} <\infty\right\},
\end{equation}
where
\begin{equation}\label{defmormHS}
	\dis \|\rho\|_{\H^{s}_{\Omega}}\coloneqq\dis \left(\|\rho\|^2_{L^2(\R^N)}+\frac{C_{N,s}}{2} \int\int_{\RR^{2N}\setminus(\Omc)^2}
	\frac{(\rho(x)-\rho(y))(\psi(x)-\psi(y))}{|x-y|^{N+2s}}\;\dx\, \dy\right)^{\frac 12}.
	\end{equation}
Then proceeding  exactly as for the proof of \cite[Proposition 3.1, page 385]{Dipierro2017}, we have that  $ \H^{s}_{\Omega}$  endowed with the norm \eqref{defmormHS}   is a Hilbert space.

\begin{remark}For any $\rho$ and $\psi$ in $\H^{s}_{\Omega}$, we set
	\begin{equation}\label{defFR}
	\mathcal{F}_R(\rho,\psi):= \frac{C_{N,s}}{2} \int\int_{\RR^{2N}\setminus(\Omc)^2}
	\frac{(\rho(x)-\rho(y))(\psi(x)-\psi(y))}{|x-y|^{N+2s}}\;\dx\, \dy.
	\end{equation}
	Then the norm on $\H^{s}_{\Omega}$ becomes
\begin{equation}\label{defmormHSbis}
		\dis \|\rho\|_{\H^{s}_{\Omega}}\coloneqq\dis \left(\|\rho\|^2_{L^2(\R^N)}+\mathcal{F}_R(\rho,\rho)\right)^{\frac 12}.
		\end{equation}
	Note also that from \eqref{defmormHS}, we have
	\begin{equation}\label{remnormHS}
	\begin{array}{lllll}
	\|\rho\|_{L^2(\Sigma)}&\leq& \|\rho\|_{\H^{s}_{\Omega}},\\
	\|\rho\|_{L^2(\Omega)}&\leq& \|\rho\|_{\H^{s}_{\Omega}}.
	\end{array}
	\end{equation}
\end{remark}
Let $(\H^s_\Omega)^\star$  the dual of $\H^s_\Omega.$ Then
	\begin{equation}\label{injectionHS}
	\H^s_\Omega\hookrightarrow L^2(\Omega)\hookrightarrow(\H^s_\Omega)^\star.
	\end{equation}

We denote by $(-\Delta)^s_R$ on $L^2(\Omega)$, the operator defined as follows:
	\begin{equation}
	\label{domR}D((-\Delta)^s_R)=\left\{\rho\in \H^{s}_{\Omega}\,:\, (-\Delta)^s\rho\in L^2(\Omega), \, \mathcal{N}_s\rho\in L^2(\R^N\setminus \Omega)\right\};\quad (-\Delta)^s_R\rho=(-\Delta)^s \rho \hbox{ in } \Omega.
	\end{equation}
	\begin{proposition}\label{proprobin0}
		Let $(-\Delta)^s_R$ be the operator defined in \eqref{domR}. Then  $(-\Delta)^s_R$ can be viewed as a bounded operator from $\H^{s}_{\Omega}$ into $(\H^{s}_{\Omega})^\star$ given by
		$$\langle (-\Delta)^s_R\rho,\phi\rangle_{(\H^s_\Omega)^\star,\H^s_\Omega}=\mathcal{F}_R(\rho,\phi)-
		\dis \int_{\R^N\setminus \Omega} \mathcal{N}_s\rho \phi dx \quad \forall \rho,\phi\in \H^s_\Omega$$
\end{proposition}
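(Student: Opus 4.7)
My plan is to build the extension of $(-\Delta)^s_R$ directly from the integration-by-parts identity \eqref{Int-Part} (which, inspecting its derivation, extends naturally to $\rho\in\H^s_\Omega$ satisfying $(-\Delta)^s\rho\in L^2(\Omega)$ and $\mathcal N_s\rho\in L^2(\R^N\setminus\Omega)$, and test functions $\psi\in\H^s_\Omega$: both sides only involve integrations over $\R^{2N}\setminus(\Omc)^2$, $\Omega$, and $\R^N\setminus\Omega$, so no contribution from $\Omc\times\Omc$ is ever needed). For $\rho\in D((-\Delta)^s_R)$, applying this identity with $\psi=\phi\in\H^s_\Omega$ and rearranging yields
\[
\int_{\Omega}(-\Delta)^s\rho\cdot\phi\,dx \;=\; \mathcal F_R(\rho,\phi)-\int_{\R^N\setminus\Omega}\mathcal N_s\rho\cdot\phi\,dx.
\]
The left-hand side is precisely the duality pairing of $(-\Delta)^s_R\rho\in L^2(\Omega)\hookrightarrow(\H^s_\Omega)^\star$ (via \eqref{injectionHS}) with $\phi$, so the proposed formula holds on the original domain.

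Next, I would check that the bilinear form $B(\rho,\phi):=\mathcal F_R(\rho,\phi)-\int_{\R^N\setminus\Omega}\mathcal N_s\rho\cdot\phi\,dx$ is continuous on $\H^s_\Omega\times\H^s_\Omega$. For the first piece, Cauchy--Schwarz on the Gagliardo-type double integral defining $\mathcal F_R$ gives immediately
\[
|\mathcal F_R(\rho,\phi)|\le\mathcal F_R(\rho,\rho)^{1/2}\mathcal F_R(\phi,\phi)^{1/2}\le\|\rho\|_{\H^s_\Omega}\|\phi\|_{\H^s_\Omega}.
\]
For the boundary piece, I would unfold $\mathcal N_s\rho$ via \eqref{NonlocalDeri}, split the kernel as $|x-y|^{-(N+2s)/2}\cdot|x-y|^{-(N+2s)/2}$, and apply Cauchy--Schwarz on the resulting double integral over $(\R^N\setminus\Omega)\times\Omega$. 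After recombining with the symmetric cross-piece of $\mathcal F_R$, the whole contribution is controlled by a multiple of $\|\rho\|_{\H^s_\Omega}\|\phi\|_{\H^s_\Omega}$.

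Combining the two estimates, $B$ is continuous on $\H^s_\Omega\times\H^s_\Omega$, so $\rho\mapsto B(\rho,\cdot)$ defines a bounded linear map $\H^s_\Omega\to(\H^s_\Omega)^\star$. Since this map agrees with the natural duality pairing for $\rho\in D((-\Delta)^s_R)$, and since $D((-\Delta)^s_R)$ is dense in $\H^s_\Omega$ (by standard truncation and mollification arguments preserving the Gagliardo integrability over $\R^{2N}\setminus(\Omc)^2$), it is the unique continuous extension of $(-\Delta)^s_R$ asserted in the proposition.

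The technical heart of the argument is the boundary estimate. The naive bound $|\int_{\R^N\setminus\Omega}\mathcal N_s\rho\cdot\phi\,dx|\le\|\mathcal N_s\rho\|_{L^2(\R^N\setminus\Omega)}\|\phi\|_{L^2(\R^N\setminus\Omega)}$ fails to be uniform in $\|\rho\|_{\H^s_\Omega}$, because $\|\mathcal N_s\rho\|_{L^2(\R^N\setminus\Omega)}$ is not itself controlled by the $\H^s_\Omega$-norm for $\rho$ outside the original domain. The workaround is to avoid separating the two terms of $B$: one recombines the boundary term with the $\Omega\times(\R^N\setminus\Omega)$ cross-piece of $\mathcal F_R$ before estimating, so that the kernel $|x-y|^{-N-2s}$ only ever appears through a Gagliardo-type quantity already built into $\|\cdot\|_{\H^s_\Omega}$. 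Carrying this cancellation out cleanly is the main obstacle of the proof.
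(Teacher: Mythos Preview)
Your opening step---deriving the formula on $D((-\Delta)^s_R)$ from the integration-by-parts identity \eqref{Int-Part}---is exactly what the paper does. After that, however, the paper and you diverge.

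The paper's entire argument is the ``naive bound'' you explicitly reject. It simply writes
\[
\Bigl|\mathcal F_R(\rho,\phi)-\int_{\R^N\setminus\Omega}\mathcal N_s\rho\,\phi\,dx\Bigr|
\le \bigl(\mathcal F_R(\rho,\rho)+\|\mathcal N_s\rho\|_{L^2(\R^N\setminus\Omega)}^2\bigr)^{1/2}\|\phi\|_{\H^s_\Omega},
\]
and concludes $\|(-\Delta)^s_R\rho\|_{(\H^s_\Omega)^\star}\le(\mathcal F_R(\rho,\rho)+\|\mathcal N_s\rho\|_{L^2(\R^N\setminus\Omega)}^2)^{1/2}$. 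That is the whole proof. No recombination, no density argument, no attempt to control the right-hand side by $\|\rho\|_{\H^s_\Omega}$ alone. In effect the paper only shows that for each $\rho$ in the domain \eqref{domR} the pairing defines an element of $(\H^s_\Omega)^\star$, with a bound depending on the domain data; this is all that is subsequently used (in Step~2 of Theorem~\ref{exist1aux}).

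Your proposal is more ambitious: you aim for a bound purely in terms of $\|\rho\|_{\H^s_\Omega}$ and a genuine continuous extension to all of $\H^s_\Omega$. But the recombination you sketch is not carried out, and there is reason to doubt it closes. After cancelling the $\phi(y)$-piece of the cross term against $\int_{\Omc}\mathcal N_s\rho\,\phi$, one is left with
\[
C_{N,s}\int_{\Omega}\phi(x)\int_{\R^N\setminus\Omega}\frac{\rho(x)-\rho(y)}{|x-y|^{N+2s}}\,dy\,dx,
\]
and a Cauchy--Schwarz split produces the weight $\int_{\R^N\setminus\Omega}|x-y|^{-N-2s}\,dy$, which blows up like $\mathrm{dist}(x,\partial\Omega)^{-2s}$ as $x\to\partial\Omega$; this is not controlled by $\|\phi\|_{\H^s_\Omega}$ without further input (e.g.\ a Hardy-type inequality). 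The density claim for $D((-\Delta)^s_R)$ in $\H^s_\Omega$ is also asserted without proof. So relative to the paper you are attempting a stronger statement whose hard step you have identified but not resolved, whereas the paper is content with the weaker (domain-dependent) bound obtained in two lines.
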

\begin{proof}
	From  \eqref{domR} and the integration by parts formula \eqref{Int-Part} we have
	$$
	\int_{\Omega}\phi\, (-\Delta)^s_R\rho\;\dx=\int_{\Omega}\phi(-\Delta)^s\rho\;\dx=\mathcal{F}_R(\rho,\phi)-\int_{\Omc}\phi\mathcal N_s \rho\;\dx,\quad \forall \phi,\rho\in \H^s_\Omega.
	$$
	Then using the definition of the norm of $\H^s_\Omega$ given by \eqref{defmormHS} and \eqref{remnormHS}, we have that,
	$$\left|\mathcal{F}_R(\rho,\phi)-
	\dis \int_{\R^N\setminus \Omega} \mathcal{N}_s\rho \phi dx \right|\leq
	\left(\mathcal{F}_R(\rho,\rho)+\|\mathcal{N}_s\rho\|^2_{L^2(\R^N\setminus \Omega)}\right)^{1/2}\|\phi\|_{\H^s_\Omega}$$
	Therefore  for any $\phi\in \H^s_\Omega,$
	$$
	\left|\int_{\Omega}\phi\, (-\Delta)^s_R\rho\;\dx\right|\leq
	\left(\mathcal{F}_R(\rho,\rho)+\|\mathcal{N}_s\rho\|^2_{L^2(\R^N\setminus \Omega)}\right)^{1/2}\|\phi\|_{\H^s_\Omega}$$
	and we deduce that
	$$\|(-\Delta)^s_R\rho\|_{(\H^s_\Omega)^\star}\leq \left(\mathcal{F}_R(\rho,\rho)+\|\mathcal{N}_s\rho\|^2_{L^2(\R^N\setminus \Omega)}\right)^{1/2}.$$
\end{proof}

	Let $\mathbb{X}^\star$ be the dual of $\mathbb{X}$. Set
	\begin{equation}\label{defW0T}
	W(0,T;\mathbb{X}):= \left\{\zeta \in L^2(0,T;\mathbb{X}): \zeta_{t} \in L^2\left((0,T);\mathbb{X}^\prime\right)\right\}.
	\end{equation}
	Then $W(0,T;\mathbb{X})$ endowed with the  norm given by
	\begin{equation}\label{normW0T}
	\|\psi\|^2_{W(0,T;\mathbb{X})}=\|\psi\|^2_{L^2(0,T;\mathbb{X})}+\|\psi_t\|^2_{
		L^2\left(0,T;\mathbb{X}^\star\right)},\,\forall \psi \in W(0,T;\mathbb{X}),
	\end{equation}
	is a Hilbert space.  Moreover,  if $\mathbb{Y}$ is a Hilbert space that can be identified with its dual $\mathbb{Y}^\star$  and we have the continuous embeddings $\mathbb{X}\hookrightarrow \mathbb{Y}=\mathbb Y^\star\hookrightarrow \mathbb{X}^\star,$
	then using \cite[Theorem 1.1, page 102]{lions1971},  we have the continuous embedding
	\begin{equation}\label{contWTA}
	W(0,T;\mathbb{X})\hookrightarrow C([0,T];\mathbb{Y}).
	\end{equation}
\section{Existence results}
\label{existe}
In this section, we are interested in the existence and uniqueness of solutions to system  \eqref{modeln}.
To this end, we start by studying  the existence and uniqueness of and auxiliary model. So,
we consider the auxiliary system:
\begin{equation}\label{modelexist1aux}
\left\{
\begin{array}{rllll}
\dis  z_t+(-\Delta)^sz+ z &=&\zeta \qquad &\mbox{in}& Q,\\
\mathcal{N}_sz+z&=& \eta &\mbox{in}& \Sigma,\\
z(\cdot,0)&=& \rho_0 &\mbox{in}& \Omega,
\end{array}
\right.
\end{equation}
where
\begin{equation}\label{inter1}
\zeta=e^{-t}f \hbox{ and }\eta=e^{-t}g.
\end{equation}
\begin{definition}\label{weaksolutionaux}
Let  $\zeta\in L^2((0,T);(\H^s_\Omega)^\star),$ $\eta\in L^2(\Sigma)$  and $\rho_{0}\in L^2(\Omega)$. Let  $\mathcal{F}_R(\cdot,\cdot)$ be defined by \eqref{defFR}.  A function
	$z\in  W(0,T;\H^s_\Omega)$ is said to be a weak solution to \eqref{modelexist1aux}, if the following equality holds:
	\begin{equation}\label{Eq-Def31aux}
	\begin{array}{lll}
\dis -\int_0^T\langle \phi_t,z \rangle_{(\H^s_\Omega)^\star,\H^s_\Omega}\, dt +
	\dis\int_0^T \mathcal{F}_R(z,\phi)dt+\int_Q  \, z\, \phi dx\,dt+\int_\Sigma z\, \phi \dq\\
	=\dis \int_0^T \langle \zeta,\, \phi \rangle_{(\H^s_\Omega)^\star,\H^s_\Omega} dt+
	\int_\Sigma \eta\, \phi \dq+\int_\Omega \rho_0\,\phi(\cdot,0) \dx,\quad \forall \phi \in H(Q),
	\end{array}
	\end{equation}
	where
	$$ H(Q):=\left\{\xi\in W(0,T;\H^s_\Omega) \hbox{ and } \xi(\cdot, T)=0 \hbox{ a.e.in } \Omega\right\}.$$
\end{definition}
\begin{remark}\label{existvarphi}
	Note also that if $ \varphi \in H(Q)$, then $\varphi \in W(0,T;\H^s_\Omega)$. Therefore, we have from \eqref{contWTA} that  $\varphi(0)$ and $\varphi(T)$ exist and belong to $L^2(\Omega)$.	
\end{remark}
\begin{theorem}\label{exist1aux}
Let $\zeta\in L^2((0,T);(\H^s_\Omega)^\star),$ $\eta\in L^2(\Sigma)$  and $\rho_{0}\in L^2(\Omega)$.  Then, there exists a unique weak solution $z\in W(0,T;\H^s_\Omega)$ to \eqref{modelexist1aux} in the sense of Definition~\ref{weaksolutionaux}.
	In addition, the following estimates hold:
	\begin{equation}\label{estimation1aaux}
	\dis \sup_{\tau\in [0,T]}\|z(\tau)\|_{L^2(\Omega)}\leq  \dis
	\left( \|\zeta\|^2_{L^2((0,T);(\H^s_\Omega)^\star)}+\|\eta\|^2_{L^2(\mathcal O\times(0,T))}+\dis  \|\rho_0\|^2_{L^2(\Omega)}\right)^{1/2},
	\end{equation}	
	\begin{equation}\label{estimation1baux}
	\dis \|z\|_{L^2((0,T);\H^s_\Omega)}\leq
	\dis \left(\|\zeta\|_{L^2((0,T);(\H^s_\Omega)^\star)}+\|\eta\|_{L^2(\Sigma)}+\dis  \|\rho_0\|_{L^2(\Omega)}\right).
	\end{equation}
	and there exists a constant $C>0$ such that
	\begin{equation}\label{estimation1bauxt}
	\dis \|z\|_{W(0,T;\H^s_\Omega)}\leq C\dis \left(\|\zeta\|_{L^2((0,T);(\H^s_\Omega)^\star)}+\|\eta\|_{L^2(\Sigma)}+\dis  \|\rho_0\|_{L^2(\Omega)}\right).
	\end{equation}
\end{theorem}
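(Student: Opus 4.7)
The plan is to set up \eqref{modelexist1aux} as an abstract linear parabolic problem on the Hilbert triple $\H^s_\Omega\hookrightarrow L^2(\Omega)\hookrightarrow(\H^s_\Omega)^\star$ provided by \eqref{injectionHS} and invoke J.\,L.~Lions' classical variational existence theorem. I would introduce the bilinear form
$$ a(u,v):=\mathcal{F}_R(u,v)+\int_\Omega uv\,\dx+\int_\Omc uv\,\dx,\qquad u,v\in\H^s_\Omega, $$
and, for a.e.\ $t\in(0,T)$, the linear form
$$ L_t(\phi):=\langle\zeta(t),\phi\rangle_{(\H^s_\Omega)^\star,\H^s_\Omega}+\int_\Omc\eta(t)\,\phi\,\dx. $$
By \eqref{defmormHSbis}, $a(u,u)=\|u\|_{\H^s_\Omega}^2$, so $a$ is bilinear, symmetric, bounded and coercive with constant $1$; the $+z$ term in the interior equation and the Robin-type contribution on $\Sigma$ produce exactly the $L^2(\R^N)$ piece of the $\H^s_\Omega$-norm, so no Poincar\'e-type inequality is required. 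By \eqref{remnormHS}, $L_t\in(\H^s_\Omega)^\star$ with $\|L_t\|_{(\H^s_\Omega)^\star}\le\|\zeta(t)\|_{(\H^s_\Omega)^\star}+\|\eta(t)\|_{L^2(\Omc)}$, hence $L\in L^2(0,T;(\H^s_\Omega)^\star)$.

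I would then apply \cite[Ch.~III--IV]{lions1971} (or equivalently a Faedo--Galerkin argument with a Hilbert basis of $\H^s_\Omega$) to produce a unique $z\in W(0,T;\H^s_\Omega)$ with $z(0)=\rho_0$ in $L^2(\Omega)$ (a meaningful trace thanks to \eqref{contWTA}) satisfying, for a.e.\ $t\in(0,T)$,
$$ \langle z_t(t),\phi\rangle_{(\H^s_\Omega)^\star,\H^s_\Omega}+a(z(t),\phi)=L_t(\phi)\quad\forall\,\phi\in\H^s_\Omega. $$
To recover the test-function form \eqref{Eq-Def31aux} of Definition~\ref{weaksolutionaux}, I would pair this pointwise identity with an arbitrary $\phi\in H(Q)$, integrate in $t$, and apply the integration-by-parts
$$ \int_0^T\langle z_t,\phi\rangle_{(\H^s_\Omega)^\star,\H^s_\Omega}\,\dt=-\int_\Omega\rho_0\,\phi(\cdot,0)\,\dx-\int_0^T\langle\phi_t,z\rangle_{(\H^s_\Omega)^\star,\H^s_\Omega}\,\dt, $$
valid because $\phi(\cdot,T)=0$. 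Expanding $a$ and $L_t$ then reproduces \eqref{Eq-Def31aux} term by term, and uniqueness follows at once from the coercivity of $a$ applied to the difference of two solutions.

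For the a priori bounds, I would test the pointwise equation with $\phi=z(t)$ and integrate over $(0,\tau)$ for $\tau\in(0,T]$. This is admissible because $z\in C([0,T];L^2(\Omega))$ by \eqref{contWTA} and the chain rule $\frac{d}{dt}\|z\|_{L^2(\Omega)}^2=2\langle z_t,z\rangle_{(\H^s_\Omega)^\star,\H^s_\Omega}$ holds on $W(0,T;\H^s_\Omega)$; this yields
$$ \tfrac12\|z(\tau)\|_{L^2(\Omega)}^2+\int_0^\tau\|z(t)\|_{\H^s_\Omega}^2\,\dt=\tfrac12\|\rho_0\|_{L^2(\Omega)}^2+\int_0^\tau L_t(z(t))\,\dt, $$
and Cauchy--Schwarz combined with Young's inequality on $L_t(z)$, splitting so as to absorb a small multiple of $\|z\|_{\H^s_\Omega}^2$ into the left-hand side, delivers \eqref{estimation1aaux} and \eqref{estimation1baux} after taking the supremum in $\tau$. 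For \eqref{estimation1bauxt}, I would read $z_t$ off the pointwise equation to obtain
$$ \|z_t(t)\|_{(\H^s_\Omega)^\star}\le\|\zeta(t)\|_{(\H^s_\Omega)^\star}+\|\eta(t)\|_{L^2(\Omc)}+\|z(t)\|_{\H^s_\Omega}, $$
then square, integrate in $t$, and plug in \eqref{estimation1baux}. The main technical point I expect is careful bookkeeping around the pivot identification $L^2(\Omega)\cong L^2(\Omega)^\star$: since elements of $\H^s_\Omega$ are functions on all of $\R^N$ while the pivot is $L^2(\Omega)$, one has to ensure that the chain rule and the test-function manipulations involve the correct duality brackets so that the $+z$ term and the $+z$ Robin-type contribution on $\Sigma$ combine cleanly with $\mathcal{F}_R$ to yield the full $\H^s_\Omega$-norm on the left-hand side.
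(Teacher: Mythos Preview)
Your proposal is correct and follows essentially the same route as the paper: both invoke Lions' variational theory for abstract parabolic problems on the triple \eqref{injectionHS}, exploit that $\mathcal{F}_R(u,u)+\|u\|_{L^2(\R^N)}^2=\|u\|_{\H^s_\Omega}^2$ for coercivity, derive the a~priori bounds by testing with $z$, and read off the $z_t$-estimate from the equation. The only cosmetic difference is that the paper works directly with the space--time bilinear form $\mathcal{E}(z,\varphi)$ on $L^2((0,T);\H^s_\Omega)\times H(Q)$ and applies the Lions projection lemma, whereas you phrase the problem via the time-independent form $a(\cdot,\cdot)$ and invoke the pointwise-in-time abstract existence theorem (or Faedo--Galerkin); the computations that follow are identical.
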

\begin{proof}The proof is done in three steps.\par	
	\noindent \textbf{Step 1.} We proceed as in \cite[Page 37]{lions2013} to prove the existence of solutions. To this end, we recall that the norm on $L^2((0,T);\H_\Omega^s)$ is given by
	$$\|\varphi\|^2_{L^2((0,T);\H_\Omega^s)}=\int_0^T \|\varphi(\cdot,t)\|^2_{\H^s_\Omega} dt=
	\int_0^T\left(\|\varphi(\cdot,t)\|^2_{L^2(\R^N)}+
	\mathcal{F}_R(\varphi(\cdot,t),\varphi(\cdot,t))\right)dt$$
	and we consider the norm on  $H(Q)$ given by
	$$
	\|\rho\|^2_{H(Q)}:=\|\rho\|^2_{L^2((0,T);\H^s_\Omega)}+\|\rho(\cdot,0)\|^2_{L^2\Omega)},\, \forall \rho \in H(Q).
	$$
	It is clear that for any $\rho\in H(Q),$
	$$\|\rho\|_{L^2((0,T);\H_\Omega^s)}\leq \|\rho\|_{H(Q)}.$$
	This shows that embedding $H(Q)\hookrightarrow L^2((0,T);\H_\Omega^s)$ is continuous .\par	
	Now, let $\varphi \in H(Q)$ and consider the bilinear form $\mathcal{E}(\cdot,\cdot)$ defined on $L^2((0,T);\H_\Omega^s)\times H(Q)$ by
\begin{equation}\label{defCalE1}
	\begin{array}{lll}
	\mathcal{E}(z,\varphi)&:=&\dis -\int_0^T\langle \varphi_t,z \rangle_{(\H^s_\Omega)^\star,\H^s_\Omega}\, dt
	+\dis\int_0^T \mathcal{F}_R(z,\varphi)dt+\int_\Sigma z\varphi \dq+\int_Q  \, z\, \varphi dx\,dt,
	\end{array}
	\end{equation}
	where  $\mathcal{F}_R(\cdot,\cdot)$ is  defined as in \eqref{defFR}.
	Using Cauchy-Schwarz's inequality, the  continuity of the bilinear form $\mathcal{F}_R$ and  \eqref{remnormHS},
	it follows that there exists a constant $C=C(\varphi,N,s)>0$ such that
$$\begin{array}{rlll}
	\dis |\mathcal{E}(z,\varphi)|&\leq&\dis  \|z\|_{L^2((0,T);\H_\Omega^s)}\|\varphi_{t}\|_{L^2((0,T);(\H^s_\Omega)^\star)}+\dis
	\left(\int_0^T \mathcal{F}_R(z,z)dt\right)^{1/2}\left(\int_0^T \mathcal{F}_R(\varphi,\varphi)dt\right)^{1/2}\\
	&+&\|z\|_{L^2(\Sigma)}\|\varphi\|_{L^2(\Sigma)}+\|z\|_{L^2(Q)}\|\varphi\|_{L^2(Q)}\\
	&\leq& \|z\|_{L^2((0,T);\H_\Omega^s)}\Big(\|\varphi_{t}\|_{L^2((0,T);(\H^s_\Omega)^\star)} +3\|\varphi\|_{L^2((0,T);\H_\Omega^s)}\Big) .
	\end{array}
	$$
	We have shown that there is a constant $C=C(\varphi,N,s)>0$  such that
	$$|\mathcal{E}(z,\varphi)|\leq C\|z\|_{L^2((0,T);\H_\Omega^s)}. $$
	Consequently, for every fixed $\varphi\in H(Q),$
	the functional  $z\mapsto \mathcal{E}(z,\varphi)$ is continuous on $L^2((0,T);\H_\Omega^s).$\par	
	Next, using a simple integration by parts, it follows that for every  $\varphi\in H(Q)$,
	$$\begin{array}{rlll}
	\mathcal{E}(\varphi,\varphi)&=&\dis -\int_0^T\langle \varphi_t,\varphi \rangle_{(\H^s_\Omega)^\star,\H^s_\Omega}\, dt
	+\dis\int_0^T \mathcal{F}_R(\varphi,\varphi)dt+\int_\Sigma \varphi^2 \dq+r\int_Q  \varphi^2 dx\,dt\dq
	\\
	&\geq&\dis \frac 12  \|\varphi(0)\|^2_{L^2(\Omega)}+\dis\int_0^T \mathcal{F}_R(\varphi,\varphi)dt+\int_\Sigma |\varphi|^2 \dq+\|\varphi\|^2_{L^2(Q)}\\
	&\geq &\dis \frac 12 \|\varphi(0)\|^2_{L^2(\Omega)}+ \dis\|\varphi\|^2_{L^2((0,T);\H^s_\Omega)}\\
	&\geq &
	\dis \dis\frac{1}{2} \|\varphi\|^2_{H(Q)}.
	\end{array}
	$$	
	Finally, we define the functional $L(\cdot):H(Q)\to \R$  by
	$$
	L(\varphi):=\dis \int_0^T \langle \zeta,\, \varphi \rangle_{(\H^s_\Omega)^\star,\H^s_\Omega} dt+\int_\Sigma \eta\, \varphi \dq+\int_\Omega \rho_0\,\varphi(0) \dx \quad \forall \varphi \in H(Q).
	$$
	Using Cauchy-Schwarz's inequality and \eqref{remnormHS}, we get that
	$$\begin{array}{rll}
	|L(\varphi)|&\leq&\|\varphi(0)\|_{L^2(\Omega)}\|\rho_0\|_{L^2(\Omega)}+\|\zeta\|_{L^2((0,T);(\H^s_\Omega)^\star)}
	\|\varphi\|_{L^2((0,T);\H^s_\Omega)}+\|\eta\|_{L^2(\Sigma)}\|\varphi\|_{L^2(\Sigma)}\\
	&\leq &\left(\|\rho_0\|_{L^2(\Omega)}+\|\zeta\|_{L^2((0,T);(\H^s_\Omega)^\star)}+\|\eta\|_{L^2(\Sigma)}\right)\|\varphi\|_{H(Q)}.
	\end{array}
	$$
	Therefore,  the functional $L(\cdot)$ is continuous on $H(Q)$. Thus, there is $z\in L^2((0,T);\H_\Omega^s)$ such that
	$$
	\mathcal{E}(z,\varphi)= L(\varphi),\quad \forall \varphi \in H(Q).
	$$
	Hence, according to Definition \ref{weaksolutionaux}, the system \eqref{modelexist1aux} has a solution $z\in L^2((0,T);\H_\Omega^s)$.\\
	\noindent \textbf{Step 2.} We show that $z_t\in L^2((0,T);(\H_\Omega^s)^\star)$\par
		Since $z\in L^2((0,T);\H_\Omega^s)$,  it follows from Proposition \ref{proprobin0} that $(-\Delta)_R^sz(\cdot,t)\in (\H_\Omega^s)^\star$ and thus
		$z_t(\cdot,t)=-(-\Delta)^s_Rz(\cdot,t) -z(\cdot,t)+ \zeta(\cdot,t)\in (\H_\Omega^s)^\star.$
		If we multiply the first equation in \eqref{modelexist1aux} by $\phi\in L^2((0,T);\H_\Omega^s)$,  and use the integration by parts formula \eqref{Int-Part},  we obtain
		$$\begin{array}{rlll}
		\dis \int_{\R^N\setminus\Omega}\eta(t)\, \phi(t) \,\dx+\dis
		\left\langle \zeta(t),\phi(t)\right\rangle_{(\H_\Omega^s)^\star,\H_\Omega^s}&=&	
		\dis  \left\langle z_t(t),\phi(t)\right\rangle_{(\H_\Omega^s)^\star,\H_\Omega^s} +\dis  \mathcal{F}_R(z(t),\phi(t))\\
		&+&\dis \int_{\Omega}z(t)\, \phi(t) \,\dx+\int_{\R^N\setminus\Omega}z(t)\, \phi(t) \,\dx
		\end{array}
		$$
		This implies that
		$$\begin{array}{rlll}
		\dis \left|  \left\langle z_t(t),\phi(t)\right\rangle_{(\H_\Omega^s)^\star,\H_\Omega^s}\right|&\leq&
		\|\eta(t)\|_{L^2(\R^N\setminus\Omega)}\|\phi(t)\|_{L^2(\R^N\setminus\Omega)}+
		\|\zeta(t)\|_{(\H_\Omega^s)^\star}\|\phi(t)\|_{\H_\Omega^s}\\
		&+& \left(\mathcal{F}_R(z(t),z(t))\right)^{1/2}\left(\mathcal{F}_R(\phi(t),\phi(t))\right)^{1/2}+
		\|z(t)\|_{L^2(\R^N\setminus\Omega)}\|\phi(t)\|_{L^2(\R^N\setminus\Omega)}\\
		&+&\|z(t)\|_{L^2(\Omega)}\|\phi(t)\|_{L^2(\Omega)}\\
		&\leq&2\|\phi(t)\|_{\H_\Omega^s}\left(
		\|\eta(t)\|^2_{L^2(\R^N\setminus\Omega)}+\|\zeta(t)\|^2_{(\H_\Omega^s)^\star}+
		3\|z(t)\|^2_{\H_\Omega^s}\right)^{1/2}\\
		&\leq&C\|\phi(t)\|_{\H_\Omega^s}\left(
		\|\eta(t)\|_{L^2(\R^N\setminus\Omega)}+\|\zeta(t)\|_{(\H_\Omega^s)^\star}+
		\|z(t)\|_{\H_\Omega^s}\right).
		\end{array}
		$$
		This means that
		\begin{equation}\label{sam1}
		\dis \left|  \left\langle z_t(t),\phi(t)\right\rangle_{(\H_\Omega^s)^\star,\H_\Omega^s}\right|\leq C\|\phi(t)\|_{\H_\Omega^s}\left(
		\|\eta(t)\|_{L^2(\R^N\setminus\Omega)}+\|\zeta(t)\|_{(\H_\Omega^s)^\star}+
		\|z(t)\|_{\H_\Omega^s}\right),
		\end{equation}
		where $C>0.$\\
Integrating \eqref{sam1} over $(0,T)$, we get
		\begin{equation}\label{sam2}
		\dis \int_0^T \left| \left\langle z_t(t),\phi(t)\right\rangle_{(\H_\Omega^s)^\star,\H_\Omega^s}\right| dt
		\leq C\|\phi\|_{L^2((0,T);\H_\Omega^s)}\left(
		\|\eta\|_{L^2(\Sigma}+\|\zeta\|_{L^2((0,T);(\H_\Omega^s)^\star)}+
		\|z\|_{L^2((0,T);\H_\Omega^s)}\right)\end{equation}
		Using \eqref{estimation1baux} we get from \eqref{sam2} that
		\begin{equation} \label{estimationint3}
		\|z_t\|_{L^2((0,T);(\H_\Omega^s)^\star)}\leq C\left(
		\|\eta\|_{L^2(\Sigma)}+\|\zeta(t)\|_{L^2((0,T);(\H_\Omega^s)^\star)}+
		\|\rho_0\|_{L^2(\Omega)}\right).
		\end{equation}
		Thus, $z_t\in L^2((0,T);(\H_\Omega^s)^\star)$ and we have shown that $z\in W(0,T;\H_\Omega^s)$.\par
	\noindent \textbf{Step 3.} We show the estimates \eqref{estimation1aaux}-\eqref{estimation1bauxt}.\par
	Multiplying the first equation in \eqref{modelexist1aux}  by $z\in L^2((0,T);\H^s_\Omega)$  and integrating by parts, we obtain using  \eqref{Int-Part} that,
	$$\begin{array}{rll}
	\dis  \frac 12\frac{d}{dt}\|z(t)\|^2_{L^2(\Omega)}+\dis \mathcal{F}_R(z(t),z(t))+\|z(t)\|^2_{L^2(\Omc)}
	+\|z(t)\|^2_{L^2(\Omega)}
	&=&\dis   \left\langle \zeta(t),z(t)\right\rangle_{(\H_\Omega^s)^\star,\H_\Omega^s} \\
&+&\dis \int_{\Omc} \eta(x,t)z(x,t) dx\\
	&\leq &\dis \frac{1}{2}\|\zeta(t)\|^2_{(\H^s_\Omega)^\star}+\frac{1}{2}\|\eta(t)\|^2_{L^2(\Omc)}\\
	&+&\dis
	\frac{1}{2}\|z(t)\|^2_{L^2(\Omega)}+\dis \frac{1}{2}\|z(t)\|^2_{L^2(\Omc)}.
	\end{array}
$$
	We then can deduce that
	\begin{equation}\label{inter21}
	\dis  \frac 12\frac{d}{dt}\|z(t)\|^2_{L^2(\Omega)}+\frac{1}{2}\|z(t)\|^2_{\H^s_\Omega}\leq
	\dis \frac{1}{2}\|\zeta(t)\|^2_{(\H^s_\Omega)^\star}+\frac{1}{2}\|\eta(t)\|^2_{L^2(\Omc)}.
	\end{equation}
	Integrating \eqref{inter21} over $(0,\tau)$ with $\tau\in [0,T]$ yields
	$$
	\dis \frac{1}{2}\|z(\tau)\|^2_{L^2(\Omega)}+
	\frac{1}{2}\int_0^\tau\|z(t)\|^2_{\H^s_\Omega}dt\leq
	\dis \frac{1}{2}\|\zeta\|^2_{L^2(0,T);(\H^s_\Omega)^\star)}+\frac{1}{2}\|\eta\|^2_{L^2(\Sigma)}+\dis  \frac 12\|\rho_0\|^2_{L^2(\Omega)}.
	$$
	Therefore, we can deduce that
	$$
	\dis \sup_{\tau\in [0,T]}\|z(\tau)\|^2_{L^2(\Omega)}\leq  \|\zeta\|^2_{L^2(0,T);(\H^s_\Omega)^\star)}+\|\eta\|^2_{L^2(\Sigma)}+\dis  \|\rho_0\|^2_{L^2(\Omega)},
	$$
	and
	$$
	\dis \int_0^T\|z(t)\|^2_{\H^s_\Omega}dt\leq \dis \left(\|\zeta\|^2_{L^2((0,T);(\H^s_\Omega)^\star)}+\|\eta\|^2_{L^2(\Sigma)}+\dis  \|\rho_0\|^2_{L^2(\Omega)}\right).
	$$	
Combining \eqref{estimationint3} and \eqref{estimation1baux} we have obtain \eqref{estimation1bauxt}.\par
	\noindent \textbf{Step 4.} We prove uniqueness.\par
	Assume that there exist $z_1\in W(0,T;\H^s_\Omega)$ and $z_2\in W(0,T;\H^s_\Omega)$, solutions to \eqref{modelexist1aux} with the same right hand sides $\zeta,\, \eta$ and initial datum $\rho_0$.  Set $y:=z_1-z_2\in  W(0,T;\H^s_\Omega)$. Then,  $y$ satisfies
	\begin{equation}\label{pazero1}
	\left\{
	\begin{array}{rllll}
	\dis  y_t+(-\Delta)^sy+ y &=&0 \qquad &\mbox{in}& Q,\\
	\mathcal{N}_sy+y&=& 0 &\mbox{in}& \Sigma,\\
	y(.,0)&=& 0 &\mbox{in}& \Omega.
	\end{array}
	\right.
	\end{equation}
	Multiplying the first equation of \eqref{pazero1}  by $y$ and integrating by parts over $Q$ using  \eqref{Int-Part}, it follows that,
	$$\begin{array}{lll}
	0&=&\dis \frac 12\|y(T)\|^2_{L^2(\Omega)}+\dis \int_0^T \mathcal{F}_R(y,y) dt+\|y\|^2_{L^2(Q)}+\|y\|^2_{L^2(\Sigma)}\\
	&\geq&\dis  \frac 12\|y(T)\|^2_{L^2(\Omega)}+\frac{1}{2} \|y\|^2_{L^2((0,T);\H^s_\Omega)}\\
	&\geq&\dis \frac{1}{2}\|y\|^2_{L^2((0,T);\H^s_\Omega)}.
	\end{array}$$
	Hence,  we can deduce that $y=0 $ in $\R^N$. Thus,  $z_1=z_2$ in $\R^N$ and we have shown uniqueness.\\
	The proof is finished.
\end{proof}
The existence result of solutions to the system  \eqref{modeln} is derived from
Theorem \ref{exist1aux} as follows.
\begin{corollary}\label{exist1}
	Let $f\in L^2((0,T);(\H^s_\Omega)^\star),$ $g\in L^2(\Sigma)$,  and $\rho^{0}\in L^2(\Omega)$
	Then, there exists a unique weak solution $\rho\in W(0,T;\H^s_\Omega)$ of \eqref{modeln}.
	Moreover, the following estimates hold:
	\begin{equation}\label{estimation1a}
	\dis \sup_{\tau\in [0,T]}\|\rho(\tau)\|_{L^2(\Omega)}\leq  \dis
	e^T\left( \|f\|_{L^2((0,T);(\H^s_\Omega)^\star)}+\|g\|_{L^2(\mathcal O\times(0,T))}+\dis
	\|\rho_0\|_{L^2(\Omega)}\right),
	\end{equation}	
	\begin{equation}\label{estimation1b}
	\dis \|z\|_{L^2((0,T);\H^s_\Omega)}\leq
	\dis e^T\left(\|f\|_{L^2((0,T);(\H^s_\Omega)^\star)}+\|g\|_{L^2(\Sigma)}+\dis  \|\rho_0\|_{L^2(\Omega)}\right).
	\end{equation}
	and
	\begin{equation}\label{estimation1bt}
	\dis \|z\|_{W(0,T;\H^s_\Omega)}\leq Ce^T\dis
	\left(\|f\|_{L^2((0,T);(\H^s_\Omega)^\star)}+\|g\|_{L^2(\Sigma)}+\dis  \|\rho_0\|_{L^2(\Omega)}\right).
	\end{equation}
	for some constant $C>0$.
\end{corollary}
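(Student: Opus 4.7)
The natural strategy is to reduce \eqref{modeln} to the auxiliary system \eqref{modelexist1aux} via the exponential rescaling already suggested by \eqref{inter1}. Concretely, I plan to set
\[
\zeta(x,t):=e^{-t}f(x,t),\qquad \eta(x,t):=e^{-t}g(x,t),
\]
and relate a weak solution $\rho$ of \eqref{modeln} to a weak solution $z$ of \eqref{modelexist1aux} by the substitution $\rho(x,t)=e^{t}z(x,t)$. A direct formal computation gives $\rho_t+(-\Delta)^s\rho=e^{t}(z_t+z+(-\Delta)^s z)=e^{t}\zeta=f$ in $Q$, $\mathcal N_s\rho+\rho=e^{t}(\mathcal N_s z+z)=e^{t}\eta=g$ in $\Sigma$, and $\rho(\cdot,0)=z(\cdot,0)=\rho_0$ in $\Omega$, which confirms that the rescaling is the correct one.

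For existence, I would first observe that since $f\in L^2((0,T);(\H^s_\Omega)^\star)$ and $g\in L^2(\Sigma)$, the functions $\zeta$ and $\eta$ inherit the same regularity with
\[
\|\zeta\|_{L^2((0,T);(\H^s_\Omega)^\star)}\le \|f\|_{L^2((0,T);(\H^s_\Omega)^\star)},\qquad \|\eta\|_{L^2(\Sigma)}\le \|g\|_{L^2(\Sigma)},
\]
so Theorem \ref{exist1aux} yields a unique $z\in W(0,T;\H^s_\Omega)$ solving \eqref{modelexist1aux} with data $(\zeta,\eta,\rho_0)$. I would then set $\rho:=e^{t}z$ and verify that $\rho\in W(0,T;\H^s_\Omega)$ (the factor $e^{t}$ is a smooth scalar multiplier bounded by $e^T$, so it preserves this regularity together with the obvious identity $\rho_t=e^{t}z+e^{t}z_t$). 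To show that $\rho$ satisfies the weak formulation of \eqref{modeln} (defined as in Definition \ref{weaksolutionaux} but without the extra zero-order term $\int_Q z\phi$ and with data $(f,g,\rho_0)$), I would insert test functions of the form $\phi(x,t)=e^{-t}\xi(x,t)$ with $\xi\in H(Q)$ into the variational identity \eqref{Eq-Def31aux} for $z$ and perform a one-line integration by parts in $t$ to absorb the $e^{-t}$ factors and cancel the lower-order $z$-term.

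For the estimates, the bound $|\rho(x,t)|\le e^{T}|z(x,t)|$ together with \eqref{estimation1aaux}--\eqref{estimation1bauxt} for $z$ immediately gives
\[
\sup_{\tau\in[0,T]}\|\rho(\tau)\|_{L^2(\Omega)}\le e^T\sup_{\tau\in[0,T]}\|z(\tau)\|_{L^2(\Omega)},\qquad \|\rho\|_{L^2((0,T);\H^s_\Omega)}\le e^T\|z\|_{L^2((0,T);\H^s_\Omega)},
\]
and a similar bound for the $W(0,T;\H^s_\Omega)$-norm once one also controls $\rho_t=e^{t}(z_t+z)$ in $L^2((0,T);(\H^s_\Omega)^\star)$ by the triangle inequality. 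Combining these with the already-established bounds on $\|\zeta\|$ and $\|\eta\|$ in terms of $\|f\|$ and $\|g\|$ yields \eqref{estimation1a}--\eqref{estimation1bt}.

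Uniqueness is essentially automatic: if $\rho_1,\rho_2$ were two solutions of \eqref{modeln}, then $z_i:=e^{-t}\rho_i$ would both solve \eqref{modelexist1aux} with data $(\zeta,\eta,\rho_0)$, and the uniqueness part of Theorem \ref{exist1aux} forces $z_1=z_2$, hence $\rho_1=\rho_2$. The only mildly delicate point I anticipate is carrying the rescaling rigorously through the weak formulation: one must check that the map $\xi\mapsto e^{-t}\xi$ is a bijection of the admissible test space $H(Q)$ onto itself (which it is, since $e^{-t}$ is a smooth positive multiplier preserving the endpoint condition $\xi(\cdot,T)=0$), so that testing against $e^{-t}\xi$ is genuinely equivalent to testing against arbitrary $\xi\in H(Q)$ and no admissible test function is lost.
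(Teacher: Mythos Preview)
Your proposal is correct and follows exactly the same approach as the paper: reduce \eqref{modeln} to the auxiliary system \eqref{modelexist1aux} via the substitution $\rho=e^{t}z$ with $\zeta=e^{-t}f$, $\eta=e^{-t}g$, apply Theorem~\ref{exist1aux}, and then transfer the estimates back using $\|e^{-t}\rho\|\ge e^{-T}\|\rho\|$. Your write-up is in fact more detailed than the paper's (which states the bijection $\rho\leftrightarrow z$ and the estimate transfer in two sentences), particularly in your care about verifying that $\xi\mapsto e^{-t}\xi$ is a bijection of the test space $H(Q)$; this is a welcome addition but not a different method.
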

\begin{proof} Let  $f\in L^2((0,T);(\H^s_\Omega)^\star),$ $g\in L^2(\Sigma)$  and
	$\rho^{0}\in L^2(\Omega)$. Then $\rho=e^{t} z$ is a weak solution of \eqref{modeln}
	if and only if $z$ is a weak solution of \eqref{modelexist1aux}. Consequently, using
	Theorem  \ref{exist1aux}, we have  that there exists a unique $\rho\in W(0,T;\H^s_\Omega)$,
	solution of  \eqref{modeln}.  Moreover, letting $z=e^{-t}\rho$  in from the estimates
	\eqref{estimation1aaux}-\eqref{estimation1bauxt}, we can deduce that
	\eqref{estimation1a}-\eqref{estimation1bt} hold, because for  $t\in [0,T]$, we have
	$$\|e^{-t}\rho\|_{L^2((0,T);\H^s_\Omega)}\geq e^{-T}\|\rho\|_{L^2((0,T);\H^s_\Omega)} \hbox{ and }
	\|e^{-t}\rho(t)\|_{L^2(\Omega)}\geq e^{-T}\|\rho(t)\|_{L^2(\Omega)}.$$
	The proof is finished.
\end{proof}
\begin{proposition}\label{propexistence}
	Let $f\in L^2((0,T);\H^{-s}(\Omega)),$ and $\rho^{0}\in L^2(\Omega)$. Let also $\mathcal{F}(.,.)$ be defined
	as in \eqref{defF}. Then, there exists a unique weak solution $\ro \in W(0,T;\H^s_0(\Omega))$ to \eqref{modeld} in the following sense:
	\begin{equation}\label{Eq-Def31}
	\begin{array}{lll}
	\dis -\int_0^T\langle \phi_t,\rho \rangle_{\H^{-s}(\Omega),\H^s_0(\Omega)}\, dt + \dis
	\int_0^T \mathcal{F}(\ro,\phi)dt
	=\dis \int_0^T \langle f,\, \phi\rangle_{\H^{-s}(\Omega),\H^s_0(\Omega)}\, dt+\int_\Omega \rho_0\,\phi(0) \dx,
	\quad \forall \phi \in H(Q),
	\end{array}
	\end{equation}
	where
	$$ H(Q):=\left\{\xi\in W(0,T;\H^s_0(\Omega)) \hbox{ and } \xi(\cdot, T)=0 \hbox{ a.e.in } \Omega\right\}.$$
	In addition, the following estimates hold
	\begin{equation}\label{estimation2prop}
	\|\ro\|^2_{\C([0,T];L^2(\Omega))}\leq \left[\|\rho_0\|^2_{L^2(\Omega)}+
	\|f\|^2_{L^2((0,T);\H^{-s}(\Omega))}\right],
	\end{equation}
	\begin{equation}\label{estimation1prop}
	\|\ro\|^2_{L^2((0,T);\H^{s}_0(\Omega))}\leq\dis
	\left[\|\rho_0\|^2_{L^2(\Omega)}+\|f\|^2_{L^2((0,T);\H^{-s}(\Omega))}\right]
	\end{equation}
	and
	\begin{equation}\label{estimation1propt}
	\|\ro\|^2_{W((0,T);\H^{s}_0(\Omega))}\leq C\dis
	\left[\|\rho_0\|^2_{L^2(\Omega)}+\|f\|^2_{L^2((0,T);\H^{-s}(\Omega))}\right]
	\end{equation}
	for some constant  $C>0$.	
\end{proposition}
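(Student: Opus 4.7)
The plan is to mirror the three-step argument used to prove Theorem~\ref{exist1aux}, but in the Dirichlet framework, replacing $\mathcal{F}_R$ by $\mathcal{F}$, the space $\H^s_\Omega$ by $\H^s_0(\Omega)$, and the realization $(-\Delta)^s_R$ by $(-\Delta)^s_D$ of Proposition~\ref{Prop-22}. A pleasant simplification is that, unlike the Robin case, one has the identity $\mathcal{F}(\varphi,\varphi)=\|\varphi\|^2_{\H^s_0(\Omega)}$, so coercivity is immediate and no auxiliary rescaling $\rho=e^t z$ is needed.

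\textbf{Existence.} Following the Lions projection method (cf.~\cite{lions2013}), I would introduce on $L^2((0,T);\H^s_0(\Omega))\times H(Q)$ the bilinear form
$$
\mathcal{E}(\rho,\varphi):=-\int_0^T\langle\varphi_t,\rho\rangle_{\H^{-s}(\Omega),\H^s_0(\Omega)}\,dt+\int_0^T\mathcal{F}(\rho,\varphi)\,dt,
$$
the linear functional $L(\varphi):=\int_0^T\langle f,\varphi\rangle_{\H^{-s}(\Omega),\H^s_0(\Omega)}\,dt+\int_\Omega\rho_0\varphi(\cdot,0)\,dx$, and equip $H(Q)$ with the norm $\|\varphi\|^2_{H(Q)}:=\|\varphi\|^2_{L^2((0,T);\H^s_0(\Omega))}+\|\varphi(\cdot,0)\|^2_{L^2(\Omega)}$. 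Continuity $\rho\mapsto\mathcal{E}(\rho,\varphi)$ on $L^2((0,T);\H^s_0(\Omega))$ follows from Cauchy--Schwarz together with the continuity of $\mathcal F$. For coercivity, integration by parts in $t$ (legitimate because $\varphi\in H(Q)\subset C([0,T];L^2(\Omega))$ by \eqref{contWTA}) together with $\varphi(\cdot,T)=0$ gives
$$
\mathcal{E}(\varphi,\varphi)=\tfrac12\|\varphi(\cdot,0)\|^2_{L^2(\Omega)}+\|\varphi\|^2_{L^2((0,T);\H^s_0(\Omega))}\geq\tfrac12\|\varphi\|^2_{H(Q)}.
$$
Continuity of $L$ on $H(Q)$ is immediate from Cauchy--Schwarz. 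The Lions projection theorem then yields $\rho\in L^2((0,T);\H^s_0(\Omega))$ with $\mathcal{E}(\rho,\varphi)=L(\varphi)$ for every $\varphi\in H(Q)$, which is exactly \eqref{Eq-Def31}.

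\textbf{Time regularity and estimates.} As in Step~2 of Theorem~\ref{exist1aux}, I would use Proposition~\ref{Prop-22} to write $\rho_t=f-(-\Delta)^s_D\rho$ in $\H^{-s}(\Omega)$ and bound $\|\rho_t(t)\|_{\H^{-s}(\Omega)}\lesssim \|f(t)\|_{\H^{-s}(\Omega)}+\|\rho(t)\|_{\H^s_0(\Omega)}$, so $\rho\in W(0,T;\H^s_0(\Omega))\hookrightarrow C([0,T];L^2(\Omega))$ by \eqref{contWTA}. Formally testing the equation against $\rho$, using $\mathcal{F}(\rho,\rho)=\|\rho\|^2_{\H^s_0(\Omega)}$ and Young's inequality $\langle f,\rho\rangle\leq\tfrac12\|f\|^2_{\H^{-s}(\Omega)}+\tfrac12\|\rho\|^2_{\H^s_0(\Omega)}$, gives
$$
\tfrac12\tfrac{d}{dt}\|\rho(t)\|^2_{L^2(\Omega)}+\tfrac12\|\rho(t)\|^2_{\H^s_0(\Omega)}\leq\tfrac12\|f(t)\|^2_{\H^{-s}(\Omega)}.
$$
Integrating on $(0,\tau)$ and taking the supremum over $\tau\in[0,T]$ delivers \eqref{estimation2prop} and \eqref{estimation1prop}; combining with the bound on $\rho_t$ yields \eqref{estimation1propt}.

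\textbf{Uniqueness.} For two solutions $\rho_1,\rho_2\in W(0,T;\H^s_0(\Omega))$ with the same data, the difference $y:=\rho_1-\rho_2$ satisfies the homogeneous version of \eqref{modeld}. Testing against $y$ and integrating over $(0,T)$ produces $\tfrac12\|y(T)\|^2_{L^2(\Omega)}+\|y\|^2_{L^2((0,T);\H^s_0(\Omega))}\leq 0$, forcing $y\equiv 0$.

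The main technical point I expect is not a computation but a careful justification of the ``formal'' identity $\int_0^T\langle\rho_t,\rho\rangle_{\H^{-s},\H^s_0}\,dt=\tfrac12\|\rho(T)\|^2_{L^2(\Omega)}-\tfrac12\|\rho_0\|^2_{L^2(\Omega)}$ used in the energy estimate, together with the analogous identity for $\varphi\in H(Q)$ used for coercivity; both rest on the embedding $W(0,T;\H^s_0(\Omega))\hookrightarrow C([0,T];L^2(\Omega))$ from \eqref{contWTA} and a standard density/regularization argument. Everything else is routine, being a direct Dirichlet-analog of Theorem~\ref{exist1aux}.
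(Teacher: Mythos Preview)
Your proposal is correct and follows essentially the same approach as the paper's own proof: the same Lions projection argument with the same bilinear form $\mathcal{E}$, linear functional $L$, and $H(Q)$-norm for existence; the same use of Proposition~\ref{Prop-22} to obtain $\rho_t\in L^2((0,T);\H^{-s}(\Omega))$; the same energy estimate via testing against $\rho$ and Young's inequality; and the same uniqueness argument by testing the difference against itself. Your explicit remark on justifying the time-integration-by-parts identity via \eqref{contWTA} is a welcome clarification that the paper leaves implicit.
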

\begin{proof} We proceed  as above for the proof of Theorem \ref{exist1aux}.\par	
	\noindent \textbf{Step 1.} We  first observe that embedding $H(Q)\hookrightarrow L^2((0,T);\H^s_0(\Omega))$ is continuous since 	for any $\rho\in H(Q),$
	$$\|\rho\|_{L^2((0,T);\H^s_0(\Omega)}\leq \|\rho\|_{H(Q)},$$
where  the norm on $L^2((0,T);\H^{s}_0(\Omega))$ is given by
	$$\|\varphi\|^2_{L^2((0,T);\H^s_0(\Omega))}=\int_0^T \|\varphi(\cdot,t)\|^2_{\H^s_0(\Omega)} dt=
	\mathcal{F}(\varphi(\cdot,t),\varphi(\cdot,t))dt,\quad \forall \varphi\in L^2((0,T);\H^{s}_0(\Omega))$$
	and the norm on  $H(Q)$ given by
	$$
	\|\rho\|^2_{H(Q)}:=\|\rho\|^2_{L^2((0,T);\H^s_0(\Omega))}+\|\rho(\cdot,0)\|^2_{L^2\Omega)},\, \forall \rho \in H(Q).
	$$
		We then  consider the bilinear form $\mathcal{E}(\cdot,\cdot)$ defined on
	$L^2((0,T);\H^{s}_0(\Omega))\times H(Q)$ by
	\begin{equation}\label{defCalE1bis}
	\begin{array}{lll}
	\mathcal{E}(\rho,\varphi)&:=&\dis -\int_0^T\langle \varphi_t,\rho \rangle_{(\H^{-s}(\Omega),\H^{s}_0(\Omega)}\, dt
	+\dis\int_0^T \mathcal{F}(\rho,\varphi)dt.
	\end{array}
	\end{equation}	
	Using Cauchy-Schwarz's inequality, the  continuity of the bilinear form $\mathcal F$ and  \eqref{norm0},
	it follows that
	$$\begin{array}{rlll}
	\dis |\mathcal{E}(\rho,\varphi)|
	&\leq& \|\rho\|_{L^2((0,T);\H^{s}(\Omega))}\Big(\|\varphi_{t}\|_{L^2((0,T);\H^{-s}(\Omega))}
	+\|\varphi\|_{L^2((0,T);\H^{s}_0(\Omega))}\Big) .
	\end{array}
	$$
	Thus, for every fixed $\varphi\in H(Q),$
	the functional  $\rho\mapsto \mathcal{E}(\rho,\varphi)$ is continuous on $L^2((0,T);\H^{s}_0(\Omega)).$\par	
	Next,  for every  $\varphi\in H(Q)$,
	$$\begin{array}{rlll}
	\mathcal{E}(\varphi,\varphi)&=&\dis -\int_0^T\langle \varphi_t,\varphi \rangle_{\H^{-s}(\Omega),\H^{s}_0(\Omega)}\, dt	+
	\dis\int_0^T \mathcal{F}(\varphi,\varphi)dt\\
	&\geq&\dis \frac 12  \|\varphi(0)\|^2_{L^2(\Omega)}+\dis\int_0^T \mathcal{F}(\varphi,\varphi)dt
	\\	
	&\geq &
	\dis \dis\frac{1}{2} \|\varphi\|^2_{H(Q)}.
	\end{array}
	$$	
	Finally, we define the functional $L(\cdot):H(Q)\to \R$  by
	$$
	L(\varphi):=\dis \int_0^T \langle f,\, \varphi\rangle_{\H^{-s}(\Omega),\H^s_0(\Omega)}\, dt+
	\int_\Omega \rho_0\,\varphi(0) \dx.$$
	Using Cauchy-Schwarz's inequality and \eqref{norm0}, we get that
	$$\begin{array}{rll}
	|L(\varphi)|
	&\leq &\left(\|\rho_0\|_{L^2(\Omega)}+\|f\|_{L^2((0,T);\H^{-s}(\Omega))}\right)\|\varphi\|_{H(Q)}.
	\end{array}
	$$
	Therefore,  the functional $L(\cdot)$ is continuous on $H(Q)$. Thus, there is $\rho\in L^2((0,T);\H^{s}_0(\Omega))$ such that
	$$
	\mathcal{E}(\rho,\varphi)= L(\varphi),\quad \forall \varphi \in H(Q).
	$$
	Hence,  the system \eqref{modeld} has a solution $\rho\in L^2((0,T);\H^{s}_0(\Omega))$.\\
	\noindent \textbf{Step 2.} We show that $\rho_t\in L^2((0,T);\H^{-s}(\Omega))$\par	
	Since $\rho\in L^2((0,T);\H^{s}_0(\Omega))$,  it follows from Proposition \ref{Prop-22} that
	$(-\Delta)_D^s\rho(\cdot,t)\in \H^{-s}(\Omega)$ and thus
	$\rho_t(\cdot,t)=-(-\Delta)^s_D\rho(\cdot,t) + f(\cdot,t)\in \H^{-s}(\Omega).$
	If we multiply the first equation in \eqref{modeld} by $\phi\in L^2((0,T);\H^{s}_0(\Omega))$,
	and use the integration by parts formula \eqref{Int-Part0},  we obtain
	$$
	\dis
	\left\langle f(t),\phi(t)\right\rangle_{\H^{-s}(\Omega),\H^{s}_0(\Omega)}=	
	\dis  \left\langle \rho_t(t),\phi(t)\right\rangle_{\H^{-s}(\Omega),\H^{s}_0(\Omega)} +
	\dis  \mathcal{F}(\rho(t),\phi(t)).
	$$
	Thus
	$$
	\dis \left|  \left\langle \rho_t(t),\phi(t)\right\rangle_{\H^{-s}(\Omega),\H^{s}_0(\Omega)}\right|
	\leq\|\phi(t)\|_{\H^{s}_0(\Omega)}\left(
	\|f(t)\|_{\H^{-s}(\Omega)}+
	\|\rho(t)\|_{\H^{s}_0(\Omega)}\right),
	$$
	which integrating  over $(0,T)$ gives
	\begin{equation}\label{sam2bis}
	\dis \int_0^T \left|  \left\langle \rho_t(t),\phi(t)\right\rangle_{\H^{-s}(\Omega),\H^{s}_0(\Omega)}\right| dt
	\leq \|\phi\|_{L^2((0,T);\H^{s}_0(\Omega))}
	\left(
	\|f\|_{L^2((0,T);\H^{-s}(\Omega))}+
	\|\rho\|_{L^2((0,T);\H^{s}_0(\Omega)}\right)
	\end{equation}
	Using \eqref{estimation1prop} we get from \eqref{sam2bis} that
	\begin{equation} \label{estimationint3bis}
	\|\rho_t\|_{L^2((0,T);\H^{-s}(\Omega))}\leq C\|\phi\|_{L^2((0,T);\H^{s}_0(\Omega))}
	\left(
	\|f\|_{L^2((0,T);\H^{-s}(\Omega))}+
	\|\rho\|_{L^2((0,T);\H^{s}_0(\Omega)}\right)
	\end{equation}
	for some constant $C>0$. \par
		\noindent \textbf{Step 3.} We show the estimates \eqref{estimation2prop}-\eqref{estimation1propt}.\par
	Multiplying the first equation in \eqref{modeld}  by $\rho\in W((0,T);\H^{s}_0(\Omega))$  and integrating by parts,
	we deduce that
	\begin{equation}\label{inter21bis}
	\dis  \frac 12\frac{d}{dt}\|\rho(t)\|^2_{L^2(\Omega)}+\frac{1}{2}\|\rho(t)\|^2_{\H^{s}_0(\Omega)}\leq
	\dis \frac{1}{2}\|f(t)\|^2_{\H^{s}_0(\Omega)}.
	\end{equation}
Integrating \eqref{inter21bis} over $(0,\tau)$ with $\tau\in [0,T]$ yields
	$$
	\dis \frac{1}{2}\|\rho(\tau)\|^2_{L^2(\Omega)}+
	\frac{1}{2}\int_0^\tau\|\rho(t)\|^2_{\H^{s}_0(\Omega)}dt\leq
	\dis \frac{1}{2}\|f\|^2_{L^2((0,T);\H^{-s}(\Omega))}+\dis  \frac 12\|\rho_0\|^2_{L^2(\Omega)}.
	$$
	Therefore, we can deduce that \eqref{estimation2prop} and \eqref{estimation1prop}.
	Combining \eqref{estimation1prop} and \eqref{estimationint3bis}, we obtain \eqref{estimation1propt}.\par	
	\noindent \textbf{Step 4.} We prove uniqueness.\par
	Assume that there exist $\rho_1\in W(0,T;\H^{s}_0(\Omega))$ and $\rho_2\in W(0,T;\H^{s}_0(\Omega))$, solutions to
	\eqref{modeld} with the same right hand side $f$ and initial datum $\rho_0$.  Set $y:=\rho_1-\rho_2\in
	W(0,T;\H^{s}_0(\Omega))$. Then,  $y$ satisfies
	\begin{equation}\label{pazero2}
	\left\{
	\begin{array}{rllll}
	\dis  y_t+(-\Delta)^sy &=&0 \qquad &\mbox{in}& Q,\\
	y&=& 0 &\mbox{in}& \Sigma,\\
	y(.,0)&=& 0 &\mbox{in}& \Omega.
	\end{array}
	\right.
	\end{equation}
	By multiplying the first equation of \eqref{pazero2}  by $y$ and integrating by parts over $Q$ using
	\eqref{Int-Part0}, it follows that,
	$$
	0	\geq\dis  \frac 12\|y(T)\|^2_{L^2(\Omega)}+\frac{1}{2} \|y\|^2_{L^2((0,T);\H^{s}_0(\Omega))}\\
	\geq\dis \frac{1}{2}\|y\|^2_{L^2((0,T);\H^{s}_0(\Omega))}.
	$$
	Hence,  we can deduce that $y=0 $ in $\R^N$. Thus,  $\rho_1=\rho_2$ in $\R^N$ and we have shown uniqueness.\\
	The proof is finished.
\end{proof}
\section{Weak maximum principle results}
\label{maximum}
In this section, we give some comparison theorems and establish  weak maximum principles for  models \eqref{modeln} and \eqref{modeld}.
We first prove the following result.
	\begin{proposition}	
		Let $\varphi\in \V$, where $\V:=\H_0^{s}(\Omega)$ or $\H^{s}_{\Omega}$. Let also $\mathcal{F}(.,.)$ and $\mathcal{F}_R(.,.)$  be defined as in \eqref{defF} and \eqref{defFR} respectively. If we write $\varphi(x)=\varphi^+(x)-\varphi^-(x)$, where $\varphi^+(x)=\max(\varphi(x),0)$ and $\varphi^-(x)=\max(0,-\varphi(x))$. Then the following inequality holds for $\mathcal{G}=\mathcal{F}$ or $\mathcal{F}_R$ ,
		\begin{equation}\label{ineq}
		-\mathcal{G}(\varphi,\varphi^-)\geq \|\varphi^-\|^2_{\V}\geq 0.
		\end{equation}
	\end{proposition}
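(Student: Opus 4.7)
The plan is to exploit the pointwise decomposition $\varphi = \varphi^+ - \varphi^-$ to rewrite the bilinear form and then show that the cross term has a favorable sign via a direct case analysis on the kernel's integrand.

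First I would expand the kernel using $\varphi(x)-\varphi(y) = (\varphi^+(x)-\varphi^+(y)) - (\varphi^-(x)-\varphi^-(y))$. Multiplying by $(\varphi^-(x)-\varphi^-(y))$ yields the pointwise identity
\begin{equation*}
(\varphi(x)-\varphi(y))(\varphi^-(x)-\varphi^-(y)) = (\varphi^+(x)-\varphi^+(y))(\varphi^-(x)-\varphi^-(y)) - (\varphi^-(x)-\varphi^-(y))^2,
\end{equation*}
so after dividing by $|x-y|^{N+2s}$ and integrating (over $\R^N\times\R^N$ for $\mathcal{G}=\mathcal{F}$, or over $\R^{2N}\setminus(\Omc)^2$ for $\mathcal{G}=\mathcal{F}_R$), one gets the algebraic identity
\begin{equation*}
-\mathcal{G}(\varphi,\varphi^-) = -\mathcal{G}(\varphi^+,\varphi^-) + \mathcal{G}(\varphi^-,\varphi^-).
\end{equation*}
Since $\mathcal{G}(\varphi^-,\varphi^-)$ coincides with $\|\varphi^-\|_\V^2$ (the energy part of the norm on $\V$), the inequality will follow once $\mathcal{G}(\varphi^+,\varphi^-)\leq 0$ is established.

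The main obstacle — and really the only nontrivial step — is showing $\mathcal{G}(\varphi^+,\varphi^-)\leq 0$. I would do this by proving that the integrand is non-positive a.e. in $(x,y)$. Split into four cases according to the signs of $\varphi(x)$ and $\varphi(y)$: if both are $\geq 0$, then $\varphi^-(x)=\varphi^-(y)=0$ and the factor $(\varphi^-(x)-\varphi^-(y))$ vanishes; symmetrically if both are $\leq 0$, the factor $(\varphi^+(x)-\varphi^+(y))$ vanishes. In the remaining mixed case, say $\varphi(x)\geq 0 \geq \varphi(y)$, one has $\varphi^+(x)-\varphi^+(y) = \varphi^+(x) \geq 0$ while $\varphi^-(x)-\varphi^-(y) = -\varphi^-(y) \leq 0$, so the product is $\leq 0$; the opposite mixed case is handled the same way by symmetry. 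Since this sign analysis is purely pointwise it is insensitive to whether the integration domain is $\R^{2N}$ or $\R^{2N}\setminus(\Omc)^2$, so it covers both $\mathcal{F}$ and $\mathcal{F}_R$ simultaneously.

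Putting the pieces together, the chain
\begin{equation*}
-\mathcal{G}(\varphi,\varphi^-) = \mathcal{G}(\varphi^-,\varphi^-) - \mathcal{G}(\varphi^+,\varphi^-) \geq \mathcal{G}(\varphi^-,\varphi^-) = \|\varphi^-\|_\V^2 \geq 0
\end{equation*}
gives \eqref{ineq}. I would finish by remarking that the argument uses only the kernel structure and the elementary lattice identity for $\varphi^\pm$, so no regularity beyond membership in $\V$ is needed (this is consistent with the fact that $\H_0^s(\Omega)$ and $\H^s_\Omega$ are stable under the positive/negative part operations, which can be invoked at the start to justify $\varphi^\pm\in\V$).
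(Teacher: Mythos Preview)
Your argument is correct and is essentially the same as the paper's. Both proofs expand the bilinear form via $\varphi=\varphi^+-\varphi^-$ and reduce the question to the sign of the cross term; the only cosmetic difference is that the paper simplifies $(\varphi^+(x)-\varphi^+(y))(\varphi^-(x)-\varphi^-(y))$ to $-\varphi^+(x)\varphi^-(y)-\varphi^+(y)\varphi^-(x)$ using $\varphi^+\varphi^-=0$ pointwise, whereas you reach the same conclusion by a four-case sign analysis.
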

	\begin{proof}
		We have
		$$
		\begin{array}{llll}
		\dis 	\mathcal{G}(\varphi,\varphi^-)&= &\dis \frac{C_{N,s}}{2}\dis \int\int_{\left(\RR^{2N}\setminus(\Omc)^2\right)}
		\frac{(\varphi(x)-\varphi(y))(\varphi^-(x)-\varphi^-(y))}{|x-y|^{N+2s}}\;\dx\, \dy\\
		&=&-\dis\frac{C_{N,s}}{2}\dis \int\int_{\left(\RR^{2N}\setminus(\Omc)^2\right)}
		\frac{(\varphi^-(x)-\varphi^-(y))^2}{|x-y|^{N+2s}}\;\dx\, \dy\\
		&&-\dis \frac{C_{N,s}}{2}\dis \int\int_{\left(\RR^{2N}\setminus(\Omc)^2\right)}
		\frac{\varphi^-(x)\rho^+(y)+\rho^+(x)\varphi^-(y)}{|x-y|^{N+2s}}\;\dx\, \dy\\
		&=& \dis-\|\varphi^-\|^2_{\V}-\dis\frac{C_{N,s}}{2} \int\int_{\left(\RR^{2N}\setminus(\Omc)^2\right)}
		\frac{\varphi^-(x)\varphi^+(y)+\varphi^+(x)\varphi^-(y)}{|x-y|^{N+2s}}\;\dx\, \dy.
		\end{array}
		$$
		Using that $\varphi^+(x)\varphi^-(y)\geq0\;\; \hbox{for a.e. }\;\; x,y \in \R^N,$ we obtain \eqref{ineq}.
\end{proof}
\begin{theorem}\label{positive1}
	Let $\rho\in W(0,T;\H^s_0(\Omega))$ be the solution of \eqref{modeld} corresponding to data $f\in L^2(Q)$ and $\rho_0\in L^2(\Omega)$ and $\tilde{\rho}\in W(0,T;\H^s_0(\Omega))$ be the solution of \eqref{modeld} corresponding to data $\tilde{f}\in L^2(Q)$ and $\tilde{\rho}_0\in L^2(\Omega)$. If $\rho_0\leq \tilde{\rho}_0$ almost everywhere in $\Omega$ and $f\leq \tilde{f}$ almost everywhere in $Q$, then $\rho\leq \tilde{\rho}$ almost everywhere in $\R^N\times [0,T]$. In particular, if $f\geq 0$, then $\rho\geq 0$ almost everywhere in $\R^N\times [0,T]$.
\end{theorem}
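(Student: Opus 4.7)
The plan is to reduce to the homogeneous case by setting $w := \rho - \tilde{\rho} \in W(0,T;\H^s_0(\Omega))$. By linearity of \eqref{modeld} (and of its weak formulation \eqref{Eq-Def31}), $w$ is the weak solution corresponding to source $F := f - \tilde f \le 0$ in $Q$ and initial datum $w_0 := \rho_0 - \tilde\rho_0 \le 0$ in $\Omega$. Since $w = 0$ in $\Sigma$, the theorem reduces to showing $w \le 0$ a.e.\ in $\Omega \times (0,T)$, which is equivalent to $w^+ \equiv 0$.

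The key idea is to test the equation for $w$ against $w^+$. First I would note that $w \in L^2((0,T);\H^s_0(\Omega))$ implies $w^+(\cdot,t) \in \H^s_0(\Omega)$ for a.e.\ $t$ (truncation does not increase the Gagliardo seminorm because $|w^+(x) - w^+(y)| \le |w(x) - w(y)|$, and $w^+ = 0$ on $\Omc$ since $w = 0$ there). Because $w \in W(0,T;\H^s_0(\Omega))$, the standard chain rule for the positive part gives
\begin{equation*}
\langle w_t(t), w^+(t) \rangle_{\H^{-s}(\Omega),\H^s_0(\Omega)} = \frac{1}{2}\frac{d}{dt}\|w^+(t)\|^2_{L^2(\Omega)} \quad \text{for a.e. } t \in (0,T).
\end{equation*}
Applying Proposition \ref{Prop-22} and pairing the equation $w_t + (-\Delta)^s_D w = F$ with $w^+(\cdot,t)$, I obtain
\begin{equation*}
\frac{1}{2}\frac{d}{dt}\|w^+(t)\|^2_{L^2(\Omega)} + \mathcal{F}(w(t), w^+(t)) = \int_\Omega F(t) w^+(t)\,\dx.
\end{equation*}

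The main algebraic input is the preceding proposition applied with $\varphi = -w$, which yields $\varphi^- = w^+$ and hence
\begin{equation*}
\mathcal{F}(w(t), w^+(t)) = -\mathcal{F}(-w(t), (-w(t))^-) \ge \|w^+(t)\|^2_{\H^s_0(\Omega)} \ge 0.
\end{equation*}
Since $F \le 0$ and $w^+ \ge 0$, the right-hand side is nonpositive, so
\begin{equation*}
\frac{d}{dt}\|w^+(t)\|^2_{L^2(\Omega)} \le 0 \quad \text{for a.e. } t \in (0,T).
\end{equation*}
Integrating from $0$ to $\tau \in [0,T]$ and using $w^+(0) = w_0^+ = 0$ (since $w_0 \le 0$) gives $\|w^+(\tau)\|_{L^2(\Omega)} = 0$ for every $\tau$. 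Combined with $w^+ \equiv 0$ in $\Sigma$ from the Dirichlet condition, this yields $\rho \le \tilde\rho$ a.e.\ in $\R^N \times [0,T]$. The last assertion (positivity when $f \ge 0$) follows by taking $\tilde f \equiv 0$, $\tilde\rho_0 \equiv 0$ and using uniqueness from Proposition \ref{propexistence} to identify $\tilde\rho \equiv 0$.

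The step I expect to be the main obstacle is the rigorous justification that $w^+$ is an admissible test element and that the chain rule $\langle w_t, w^+\rangle = \tfrac{1}{2}\tfrac{d}{dt}\|w^+\|^2_{L^2(\Omega)}$ holds: strictly speaking, Definition \ref{weaksolutionaux}-style test functions are required to vanish at $t=T$, so one should either invoke the Lions--Magenes duality formula valid on all of $W(0,T;\H^s_0(\Omega))$, or approximate $w^+$ by a Steklov-type regularization to make the pairing pointwise-in-time legitimate before passing to the limit. Once this technical point is handled, the rest is the coercivity estimate above plus Gronwall's lemma in its trivial form.
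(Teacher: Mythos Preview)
Your proposal is correct and follows essentially the same route as the paper: set up the difference, test the equation against the positive/negative part, use the sign inequality \eqref{ineq} for $\mathcal{F}$, and conclude via the trivial Gronwall argument. The only cosmetic difference is a sign convention---the paper sets $z=\tilde\rho-\rho$ and shows $z^-=0$, while you set $w=\rho-\tilde\rho$ and show $w^+=0$; since $w^+=z^-$, the computations are literally identical. Your discussion of the chain rule $\langle w_t,w^+\rangle=\tfrac12\tfrac{d}{dt}\|w^+\|_{L^2}^2$ is in fact more careful than what the paper writes (it simply asserts the identity), and your last paragraph correctly identifies where the analytic work lies; note only that in the final positivity assertion the roles should be reversed (take the zero-data solution as the \emph{smaller} one).
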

\begin{proof}
		Let $z=\tilde{\rho}-\rho$. Then, $z\in W(0,T;\H^s_0(\Omega))$ is solution to
		\begin{equation}\label{model10}
		\left\{
		\begin{array}{rllll}
		\dis  z_t+(-\Delta)^sz &=&\tilde{f}-f \qquad &\mbox{in}& Q,\\
		z&=&0  &\mbox{in}& \Sigma,\\
		z(.,0)&=& \tilde{\rho}_0-\rho_0 &\mbox{in}& \Omega.
		\end{array}
		\right.
		\end{equation}
		Let $(x,t)\in \R^N\times [0,T]$, we write $z(x,t)=z^+(x,t)-z^-(x,t)$, where $z^+(x,t)=\max(z(x,t),0)$ and $z^-(x,t)=\max(0,-z(x,t))$. It is sufficient to show that $z^-(x,t)=0$ for almost every $(x,t)\in \R^N\times [0,T]$. Then  we have
		$$\begin{array}{llllll}
		z^-z^+&=& 0 \;\; \hbox{a.e. in } \R^N\times [0,T],\\
		z^-&=&0\;\; \hbox{in } \Sigma,\\
		z^-(x,0)&=&\max(0,-(\tilde{\rho}_0(x)-\rho_0(x))=0\;\; \hbox{a.e. in } \Omega,
		\end{array}$$
		and  $z^-\in W(0,T;\H^s_0(\Omega))$  (see e.g. \cite{War}).
		If we take the duality map between \eqref{model10}  and $\psi\in \H^s_0(\Omega)$, and use the integration by parts \eqref{Int-Part}, we have for any  $t\in [0,T]$,
		$$\begin{array}{llll}
		\dis \langle z_t (t),\psi\rangle_{\H^{-s}(\Omega),\H^s_0(\Omega)}+\mathcal{F}(z(t),\psi)
		=\dis  \dis \int_\Omega (\tilde{f}(t)-f(t))\psi dx.
		\end{array}$$
		Taking $\psi=z^-(t)$ in this latter identity yields
		\begin{equation}\label{ajout1}
		\dis\langle z_t (t),z^-(t)\rangle_{\H^{-s}(\Omega),\H^s_0(\Omega)}+\mathcal{F}(z(t),z^-(t))
		=\dis \int_\Omega (\tilde{f}(t)-f(t))z^-(t)\, \dx,
		\end{equation}
		where $\mathcal{F}$ is defined as in \eqref{defF}.
		Observing  that
		$$
		\begin{array}{llll}
		\dis\langle z_t (t),z^-(t)\rangle_{\H^{-s}(\Omega),\H^s_0(\Omega)}&=&\dis\langle (z^+-z^-)_t (t),z^-(t)\rangle_{\H^{-s}(\Omega),\H^s_0(\Omega)}\\
		&=& -\dis\langle (z^-)_t (t),z^-(t)\rangle_{\H^{-s}(\Omega),\H^s_0(\Omega)}\\
		&=&-\dis\frac{1}{2}\frac{d}{dt}\|z^-(t) \|^2_{L^2(\Omega)}
		\end{array}
		$$
		and using this latter identity, \eqref{ajout1}	becomes,
				$$
		\begin{array}{llll}
		\dis \frac{1}{2}\frac{d}{dt}\|z^-(t) \|^2_{L^2(\Omega)}-\mathcal{F}(z(t),z^-(t))= -\dis \int_\Omega (\tilde{f}(t)-f(t))z^-(t)\, \dx.
		\end{array}
		$$	
		Using \eqref{ineq} and the fact that $\tilde{f}-f\geq 0$ we deduce that
		\begin{equation}
		\dis \frac{1}{2}\frac{d}{dt}\|z^-(t) \|^2_{L^2(\Omega)}\leq 0
		\end{equation}
		and it follows from  the Gronwall's Lemma  that
		$$\|z^-(t) \|^2_{L^2(\Omega)}\leq \|z^-(\cdot,0)\|_{L^2(\Omega)}=0,$$
		because $z^-(x,0)=0$ a.e. in $\Omega$.
		Hence
		$z^-(x,t)=0$ for almost every $(x,t)\in \Omega\times [0,T]$ and since $z^-=0$ in $\Sigma$, it follows that $z^-=0$  almost everywhere  in $\R^N\times [0,T]$. Consequently,  $\tilde{\rho}\geq\rho $ almost everywhere  in $\R^N\times [0,T]$.
\end{proof}
\begin{theorem}\label{theominmax1} Let $f\in L^\infty(Q)$ be such that $f\leq 0$ a.e in $Q$ and $\rho_0\in L^\infty (\Omega)$. Then the weak solution of \eqref{modeld} satisfies
		\begin{equation}\label{minmax2}
		\rho(x,t)\leq \dis \|f\|_{L^\infty(Q)}+ \|\rho_0\|_{L^\infty(\Omega)}\hbox{ a.e. in } \R^N \times [0,T].
		\end{equation}
	\end{theorem}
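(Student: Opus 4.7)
The plan is to prove that the truncation $w := (\rho - k)^+$, with
\[
k := \|f\|_{L^\infty(Q)} + \|\rho_0\|_{L^\infty(\Omega)} \geq 0,
\]
vanishes identically on $\R^N \times [0,T]$; this clearly yields \eqref{minmax2}. Three facts make $w$ a natural candidate. First, the Dirichlet condition in \eqref{modeld} forces $\rho - k = -k \leq 0$ in $\Sigma$, so $w = 0$ in $\Sigma$ and $w(\cdot,t) \in \H_0^s(\Omega)$ for a.e.\ $t$, by standard truncation results in fractional Sobolev spaces. Second, $\rho_0 \leq \|\rho_0\|_{L^\infty(\Omega)} \leq k$ a.e.\ yields $w(\cdot,0) = 0$ a.e.\ in $\Omega$. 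Third, the regularity $\rho \in W(0,T; \H_0^s(\Omega))$ provided by Proposition~\ref{propexistence} will be enough to legitimate the chain rule for $w$ in time.

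The key computation is to test the equation $\rho_t + (-\Delta)_D^s \rho = f$ in $\H^{-s}(\Omega)$ against $w(\cdot,t)$, pointwise for a.e.\ $t$, giving
\[
\langle \rho_t(t), w(t) \rangle_{\H^{-s}(\Omega), \H_0^s(\Omega)} + \mathcal F(\rho(t), w(t)) = \int_\Omega f(t)\, w(t) \dx.
\]
The time-derivative term equals $\tfrac12 \tfrac{d}{dt}\|w(t)\|_{L^2(\Omega)}^2$ by the chain rule for truncations. For the nonlocal term, I would split the double integral defining $\mathcal F(\rho(t),w(t))$ according to whether $\rho(x)$ and $\rho(y)$ lie above or below $k$, in analogy with the proposition just preceding this theorem. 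A pointwise inspection then shows $(\rho(x) - \rho(y))(w(x) - w(y)) \geq (w(x)-w(y))^2$ on each of the four pieces, whence $\mathcal F(\rho(t), w(t)) \geq \|w(t)\|_{\H_0^s(\Omega)}^2 \geq 0$. Since $f \leq 0$ and $w \geq 0$, the right-hand side above is nonpositive, and so
\[
\tfrac12 \tfrac{d}{dt}\|w(t)\|_{L^2(\Omega)}^2 \leq 0.
\]
Integrating from $0$ to $\tau \in [0,T]$ and using $w(\cdot,0) = 0$ give $w(\tau) = 0$ in $L^2(\Omega)$; combined with $w = 0$ in $\Sigma$, this is exactly \eqref{minmax2}.

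The main obstacle is technical rather than conceptual: one must verify that $w = (\rho - k)^+$ is an admissible test function, i.e.\ that $w \in L^2(0,T;\H_0^s(\Omega))$ and that the chain-rule identity $\langle \rho_t(t), w(t)\rangle = \tfrac12 \tfrac{d}{dt}\|w(t)\|_{L^2(\Omega)}^2$ holds for a.e.\ $t$. Both rest on standard Stampacchia-type truncation results in $H^s$ and on the absolute continuity in time of $\|w(\cdot,t)\|_{L^2(\Omega)}^2$ inherited from $\rho \in W(0,T;\H_0^s(\Omega))$. Once these points are granted, the pointwise lower bound on the integrand of $\mathcal F(\rho,w)$ is a direct repetition of the sign analysis carried out in the proposition preceding the theorem, and the rest of the proof is a routine energy estimate.
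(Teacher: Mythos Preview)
Your proposal is correct and is essentially the paper's own argument, just written without the intermediate change of variable. The paper sets $w = K - \rho$ with $K = \|f\|_{L^\infty(Q)} + \|\rho_0\|_{L^\infty(\Omega)}$ and then shows $w^- = 0$ by testing against $w^-$; since $w^- = (\rho - K)^+$ is exactly your truncation, the test, the sign analysis via \eqref{ineq}, and the conclusion $\tfrac{1}{2}\tfrac{d}{dt}\|w^-(t)\|_{L^2(\Omega)}^2 \leq 0$ all coincide with yours.
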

	\begin{proof}		
		We set $w(x,t)=\|f\|_{L^\infty(Q)}+ \|\rho_0\|_{L^\infty(\Omega)}-\rho(x,t)$ for any $(t,x)\in \R^N \times (0,T)$. Then,
		$$
		w(x,0)=\|f\|_{L^\infty(Q)}+\|\rho^0\|_{L^\infty(\Omega)}-\rho^0(x)\geq 0,\quad \forall x\in \Omega.
		$$
		 Moreover, $w$ satisfies
		\begin{equation}\label{modelwb}
		\left\{
		\begin{array}{rllll}
		\dis  w_t+(-\Delta)^sw &=& -f\qquad &\mbox{in}& Q,\\
		w&=&\|f\|_{L^\infty(Q)}+\|\rho^0\|_{L^\infty(\Omega)} &\mbox{in}& \Sigma,\\
		w(.,0)&=&\|f\|_{L^\infty(Q)}+\|\rho^0\|_{L^\infty(\Omega)}-\rho^0 &\mbox{in}& \Omega.
		\end{array}
		\right.
		\end{equation}
		We write for $(x,t)\in \R^N\times [0,T]$, $w(x,t)=w^+(x,t)-w^-(x,t)$, where $w^+(x,t)=\max(w(x,t),0)$ and $w^-(x,t)=\max(0,-w(x,t))$. It is sufficient to show that $w^-(x,t)=0$ for almost every $(x,t)\in \R^N\times [0,T]$. Since $\|f\|_{L^\infty(Q)}+\|\rho^0\|_{L^\infty(\Omega)}\geq 0$, we have that   $w^-= 0 \hbox{ on } \Sigma$. We also have $w^-\in W(0,T;\H^s_0(\Omega))$.
		If we take the duality map between \eqref{modelwb}  and $w^-\in W(0,T;\H^s_0(\Omega))$, and use the integration by parts \eqref{Int-Part}, we have for any  $t\in [0,T]$,
				\begin{equation}\label{ajout10}
		\dis -\frac{1}{2}\frac{d}{dt}\|w^-(t) \|^2_{L^2(\Omega)}+\mathcal{F}(w(t),w^-(t))
		=-\dis \int_\Omega f(t)w^-(t)\, \dx,
		\end{equation}
		where $\mathcal{F}$ is defined as in \eqref{defF}.
		Hence,
		$$\dis \frac{1}{2}\frac{d}{dt}\|w^-(t) \|^2_{L^2(\Omega)}-\mathcal{F}(w(t),w^-(t))
		=\dis \int_\Omega f(t)w^-(t)\, \dx.
		$$
		Since $\dis f(t,x)\leq 0$ and $w^-(t,x)\geq 0$ for a.e $(x,t)\in \R^N\times[0,T]$, using  \eqref{ineq}, we deduce that
		\begin{equation}
		\dis \frac{1}{2}\frac{d}{dt}\|w^-(t) \|^2_{L^2(\Omega)}\leq 0.
		\end{equation}
		By using the Gronwall's Lemma it follows that
		$$\|w^-(t) \|^2_{L^2(\Omega)}\leq \|w^-(\cdot,0)\|_{L^2(\Omega)}=0,$$
		because $w^-(x,0)=0$ a.e. in $\Omega$. Hence
		$w^-(x,t)=0$ for almost every $(x,t)\in \Omega\times [0,T]$ and since $w^-=0$ in $\Sigma$, it follows that $w^-=0$  almost everywhere  in $\R^N\times [0,T]$. Consequently,  $w\geq 0$ almost everywhere  in $\R^N\times [0,T]$ and so the claim is proved. This completes the proof.
\end{proof}
\begin{remark}
	Note that from Theorem \ref{theominmax1}, we deduce that
	\begin{equation}\label{max0}
	\|\rho\|_{L^\infty(\R^N\times[0,T])}\leq  \|f\|_{L^\infty(Q)}+\|\rho_0\|_{L^\infty(\Omega)}.
	\end{equation}
\end{remark}
In order to prove the weak maximum principle for model \eqref{modeln}, we first prove the following results for the auxiliary model \eqref{modelexist1aux}.
\begin{theorem}\label{positive2}
		Let  $z\in W(0,T;\H^s_\Omega)$ be the solution of \eqref{modelexist1aux} associated to data $\zeta\in L^2(Q)$, $\eta\in L^2(\Sigma)$ and $\rho_0\in L^2(\Omega)$ and $\tilde{z}\in W(0,T;\H^s_\Omega)$ be the solution of \eqref{modelexist1aux} associated to data $\tilde{\zeta}\in L^2(Q)$, $\tilde{\eta}\in L^2(\Sigma)$ and $\tilde{\rho}_0\in L^2(\Omega)$. If $\rho_0\leq \tilde{\rho}_0$ almost everywhere in $\Omega$, $\eta\leq \tilde{\eta}$ almost everywhere in $\Sigma$ and $\zeta\leq \tilde{\zeta}$ almost everywhere in $Q$, then $z\leq \tilde{z}$ almost everywhere in $\R^N\times [0,T]$. In particular, if $\eta\geq 0$ and $\rho_{0}\geq 0$, then $z\geq 0$ almost everywhere in $\R^N\times [0,T]$.
	\end{theorem}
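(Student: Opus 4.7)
The plan is to mimic the strategy used in Theorem \ref{positive1}, but in the setting of the space $\H^s_\Omega$ with the extra zeroth-order term in both the interior equation and the Robin boundary condition. Set $w := \tilde z - z \in W(0,T;\H^s_\Omega)$. By linearity $w$ solves the system \eqref{modelexist1aux} with data $\tilde\zeta-\zeta \ge 0$ in $Q$, $\tilde\eta-\eta \ge 0$ on $\Sigma$, and $\tilde\rho_0-\rho_0 \ge 0$ in $\Omega$. Write $w=w^+-w^-$, and note (see e.g. \cite{War}) that $w^- \in W(0,T;\H^s_\Omega)$ with $w^-(\cdot,0)=0$ a.e.\ in $\Omega$ (since the initial datum of $w$ is non-negative).

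Next, I would test the weak formulation of the $w$-equation against the admissible function $w^-$. Using the integration-by-parts identity \eqref{Int-Part} together with the Robin relation $\mathcal N_s w + w = \tilde\eta-\eta$ on $\Sigma$, the boundary flux gets absorbed and one arrives, for a.e.\ $t\in(0,T)$, at an identity of the form
\begin{equation*}
\langle w_t(t),w^-(t)\rangle_{(\H^s_\Omega)^\star,\H^s_\Omega}
+\mathcal F_R(w(t),w^-(t)) + \int_\Omega w(t)w^-(t)\dx + \int_{\Omc} w(t)w^-(t)\dx
= \int_\Omega (\tilde\zeta-\zeta)w^-\dx + \int_{\Omc}(\tilde\eta-\eta)w^-\dx .
\end{equation*}
Exactly as in the proof of Theorem \ref{positive1}, the time term reduces to $-\tfrac12\frac{d}{dt}\|w^-(t)\|_{L^2(\Omega)}^2$, while the elementary pointwise identity $w\,w^- = -(w^-)^2$ (valid on all of $\R^N$ because $w^+w^-=0$) turns the two $L^2$ integrals into $-\|w^-(t)\|_{L^2(\Omega)}^2$ and $-\|w^-(t)\|_{L^2(\Omc)}^2$ respectively.

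Rearranging and applying \eqref{ineq} with $\mathcal G=\mathcal F_R$ (so $-\mathcal F_R(w,w^-)\ge 0$), together with the sign conditions $\tilde\zeta-\zeta\ge 0$, $\tilde\eta-\eta\ge 0$ and $w^-\ge 0$, I obtain the differential inequality
\begin{equation*}
\tfrac12\tfrac{d}{dt}\|w^-(t)\|_{L^2(\Omega)}^2 + \|w^-(t)\|_{L^2(\Omega)}^2 + \|w^-(t)\|_{L^2(\Omc)}^2 \le 0.
\end{equation*}
Integrating from $0$ to $\tau \in (0,T]$ and invoking $w^-(\cdot,0)=0$ in $\Omega$ forces $\|w^-(\tau)\|_{L^2(\Omega)}=0$ for every $\tau$ and $\int_0^T\|w^-(t)\|_{L^2(\Omc)}^2\,dt = 0$. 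Hence $w^-\equiv 0$ a.e.\ in $\R^N\times[0,T]$, i.e.\ $z\le\tilde z$ a.e., which is the desired comparison. The positivity statement follows by choosing $(\tilde z,\tilde\zeta,\tilde\eta,\tilde\rho_0)=(z,\zeta,\eta,\rho_0)$ and comparing with the trivial solution corresponding to zero data.

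The only delicate point I anticipate is the justification of the chain-rule identity $\langle w_t,w^-\rangle_{(\H^s_\Omega)^\star,\H^s_\Omega}=-\tfrac12\frac{d}{dt}\|w^-\|_{L^2(\Omega)}^2$ in this non-local Robin framework (since $w^-$ now depends on the trace of $w$ on $\Omc$ as well), and the measurability/regularity of $w^-$ in $\H^s_\Omega$; both are handled by standard truncation arguments and the embedding \eqref{contWTA}, but they should be cited carefully. Every other estimate is a direct adaptation of the Dirichlet proof.
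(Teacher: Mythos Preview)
Your proposal is correct and follows essentially the same line as the paper's proof: set the difference, test against its negative part, use \eqref{ineq} for $\mathcal F_R$ together with the sign assumptions on the data, and conclude via the resulting differential inequality. Your version is in fact slightly cleaner on one point: you explicitly retain the term $\|w^-(t)\|^2_{L^2(\Omc)}$ in the inequality and use $\int_0^T\|w^-(t)\|^2_{L^2(\Omc)}\,dt=0$ to conclude $w^-=0$ on $\Sigma$, whereas the paper drops this term and then asserts $u^-=0$ in $\Sigma$ without an explicit justification; your handling closes that gap. The only cosmetic issue is your last sentence on the ``in particular'' clause, where the roles of the two solutions are swapped --- you should take the trivial (zero-data) solution as the \emph{lower} comparison $z$ and the given solution as $\tilde z$.
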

	\begin{proof}
		Let $u=\tilde{z}-z$. Then, $u\in W(0,T;\H^s_\Omega)$ is solution to
		\begin{equation}\label{model11}
		\left\{
		\begin{array}{rllll}
		\dis  u_t+(-\Delta)^su+u &=&\tilde{\zeta}-\zeta \qquad &\mbox{in}& Q,\\
		\mathcal{N}_su+u&=& \tilde{\eta}-\eta  &\mbox{in}& \Sigma,\\
		u(.,0)&=& \tilde{\rho}_0-\rho_0 &\mbox{in}& \Omega.
		\end{array}
		\right.
		\end{equation}
		Let $(x,t)\in\R^N\times [0,T]$, we write $u(x,t)=u^+(x,t)-u^-(x,t)$, where $u^+(x,t)=\max(u(x,t),0)$ and $u^-(x,t)=\max(0,-u(x,t))$. It is sufficient to show that $u^-(x,t)=0$ for almost every $(x,t)\in \R^N\times [0,T]$. Then  we have that
		$$\begin{array}{llllll}
		u^-u^+&=& 0 \;\; \hbox{a.e. in } \R^N\times [0,T],\\
		u^-(x,0)&=&\max(0,-(\tilde{\rho}_0(x)-\rho_0(x))=0\;\; \hbox{a.e. in } \Omega\\
		\end{array}$$
		and  $u^-\in W(0,T;\H^s_\Omega).$
		If we take the duality map between \eqref{model10}  and $\psi\in \H^s_{\Omega}$, and use the integration by parts \eqref{Int-Part}, we have for any  $t\in [0,T]$,
		$$\begin{array}{llll}
		\dis \langle u_t (t),\psi\rangle_{(\H^s_{\Omega})^*,\H^s_{\Omega}}+\mathcal{F}_R(u(t),\psi)+\int_{\Omega}u(t)\psi\; \dx+\int_{\Omc}u(t)\psi \;\dx\\
		=\dis \int_\Omega (\tilde{\zeta}(t)-\zeta(t))\psi\; \dx+\int_{\Omc} (\tilde{\eta}(t)-\eta(t))\psi\; \dx,
		\end{array}$$
		where $\mathcal{F}_R$ is defined as in \eqref{defFR}.
		Taking $\psi=u^-(t)$ in this latter identity yields
		$$
		\begin{array}{llll}
		\dis -\frac{1}{2}\frac{d}{dt}\|u^-(t) \|^2_{L^2(\Omega)}+\mathcal{F}_R(u(t),u^-(t))
		\dis -\int_{\Omega}(u^-(t))^2\;\dx-\int_{\Omc}(u^-(t))^2\;\dx\\
		\qquad	= \dis \int_\Omega (\tilde{\zeta}(t)-\zeta(t))u^-(t)\; \dx+\int_{\Omc} (\tilde{\eta}(t)-\eta(t))u^-(t)\; \dx.
		\end{array}
		$$
				Using \eqref{ineq} and the fact that $\tilde{\zeta}-\zeta\geq 0$ almost everywhere in $Q$ and $\tilde{\eta}-\eta\geq 0$ almost everywhere in $\Sigma$, we deduce that
		\begin{equation}
		\begin{array}{llll}
		\dis \frac{1}{2}\frac{d}{dt}\|u^-(t) \|^2_{L^2(\Omega)}\leq 0.
		\end{array}
		\end{equation}
		By using the Gronwall's Lemma it follows that
		$$\|u^-(t) \|^2_{L^2(\Omega)}\leq \|u^-(\cdot,0)\|_{L^2(\Omega)}=0,$$
		because $u^-(x,0)=0$ a.e. in $\Omega$.
		Hence
		$u^-(x,t)=0$ for almost every $(x,t)\in \Omega\times [0,T]$ and since $u^-=0$ in $\Sigma$, it follows that $u^-=0$
		almost everywhere  in $\R^N\times [0,T]$. Consequently,  $\tilde{z}\geq z $ almost everywhere  in $\R^N\times [0,T]$.	
\end{proof}
\begin{corollary}\label{positive22}
	Let $\rho\in W(0,T;\H^s_\Omega)$ be the solution of \eqref{modeln} associated to data $f\in L^2(\Omega)$, $g\in L^2(\Sigma)$ and $\rho_0\in L^2(\Omega)$ and $\tilde{\rho}\in W(0,T;\H^s_\Omega)$ be the solution of \eqref{modeln} associated to data $\tilde{f}\in L^2(Q)$, $\tilde{g}\in L^2(\Sigma)$ and $\tilde{\rho}_0\in L^2(\Omega)$. If $\rho_0\leq \tilde{\rho}_0$ almost everywhere in $\Omega$, $g\leq \tilde{g}$ almost everywhere in $\Sigma$ and $f\leq \tilde{f}$ almost everywhere in $Q$, then $\rho\leq \tilde{\rho}$ almost everywhere in $\R^N\times [0,T]$. In particular, if $g\geq 0$, $f\geq 0$ and $\rho_{0}\geq 0$, then $\rho\geq 0$ almost everywhere in $\R^N\times [0,T]$.
\end{corollary}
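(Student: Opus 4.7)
The plan is to deduce Corollary \ref{positive22} directly from Theorem \ref{positive2} by the exponential change of unknown used to pass from \eqref{modeln} to the auxiliary system \eqref{modelexist1aux}, in the spirit of the proof of Corollary \ref{exist1}.

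First I would set $z(x,t):=e^{-t}\rho(x,t)$ and $\tilde z(x,t):=e^{-t}\tilde\rho(x,t)$. A direct computation (the same one used in Corollary \ref{exist1}) shows that $z$ is the weak solution in $W(0,T;\H^s_\Omega)$ of \eqref{modelexist1aux} with data $\zeta=e^{-t}f$, $\eta=e^{-t}g$, initial datum $\rho_0$, and similarly $\tilde z$ is the weak solution of \eqref{modelexist1aux} with data $\tilde\zeta=e^{-t}\tilde f$, $\tilde\eta=e^{-t}\tilde g$, initial datum $\tilde\rho_0$. Since $e^{-t}>0$, the hypotheses $f\leq\tilde f$ a.e.\ in $Q$, $g\leq\tilde g$ a.e.\ in $\Sigma$ and $\rho_0\leq\tilde\rho_0$ a.e.\ in $\Omega$ transfer to $\zeta\leq\tilde\zeta$, $\eta\leq\tilde\eta$ and $\rho_0\leq\tilde\rho_0$ with the same sign conventions.

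Next I would invoke Theorem \ref{positive2} applied to the pair $(z,\tilde z)$, which yields $z\leq\tilde z$ almost everywhere in $\R^N\times[0,T]$. Multiplying both sides by $e^{t}>0$ gives the desired inequality $\rho\leq\tilde\rho$ almost everywhere in $\R^N\times[0,T]$.

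For the particular case, observe that $\rho\equiv 0$ is the unique weak solution of \eqref{modeln} with $f=0$, $g=0$ and $\rho_0=0$ (this is the zero function in $W(0,T;\H^s_\Omega)$, which trivially satisfies Definition \ref{weaksolutionaux} after the exponential change, or is obtained uniquely from Corollary \ref{exist1}). Applying the comparison result just established with the trivial triple $(0,0,0)$ as the smaller data and $(f,g,\rho_0)$ with $f,g,\rho_0\geq 0$ as the larger data, we conclude $0\leq\rho$ almost everywhere in $\R^N\times[0,T]$. No real obstacle arises here: the entire content has been packaged into Theorem \ref{positive2}; the only point to be careful about is that the multiplicative factor $e^{-t}$ is strictly positive so inequalities are preserved in both directions of the change of variable.
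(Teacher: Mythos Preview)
Your proof is correct and follows essentially the same approach as the paper: both reduce to Theorem~\ref{positive2} via the exponential change $z=e^{-t}\rho$, using that $e^{-t}>0$ preserves the pointwise inequalities on the data, and then multiply back by $e^{t}$. Your additional sentence spelling out the particular case by comparison with the zero solution is a welcome clarification that the paper leaves implicit.
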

\begin{proof}
	Since, $\rho_0\leq \tilde{\rho}_0$ almost everywhere in $\Omega$, $g\leq \tilde{g}$ almost everywhere in $\Sigma$ and $f\leq \tilde{f}$ almost everywhere in $Q$, then using \eqref{inter1}, we have that  $\eta\leq \tilde{\eta}$ almost everywhere in $\Sigma$ and $\zeta\leq \tilde{\zeta}$ almost everywhere in $Q$. Thus, $z=e^{-t}\rho$ is the solution of \eqref{modelexist1aux} associated to data $\zeta$, $\eta$ and $\rho_0$ and $\tilde{z}=e^{-t}\tilde{\rho}$ is the solution of \eqref{modelexist1aux} associated to data $\tilde{\zeta}$, $\tilde{\eta}$ and $\tilde{\rho}_0$. Hence, using   Theorem \ref{positive2}, we deduce  that $z\leq \tilde{z}$ a.e in $\R^N\times [0,T]$. This means that $e^{-t}\rho\leq e^{-t}\tilde{\rho}$ a.e in $\R^N\times [0,T]$. Therefore, $\rho\leq \tilde{\rho}$ a.e in $\R^N\times [0,T]$. This completes the proof.
\end{proof}
\begin{theorem}\label{theominmax12} Let
		$\zeta\in L^\infty(Q),$ $\eta\in L^\infty(\Sigma)$ and $\rho_0\in L^\infty(\Omega)$. Then  we have the following boundedness result for the weak solution of \eqref{modelexist1aux}:
		\begin{equation}\label{minmax12}
		z(x,t)\leq\|\rho_0\|_{L^\infty(\Omega)}+\|\zeta\|_{L^\infty(Q)}+\|\eta\|_{L^\infty(\Sigma)} \hbox{ a.e. in } \R^N \times [0,T].
		\end{equation}
	\end{theorem}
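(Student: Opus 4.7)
The plan is to follow the template of the proof of Theorem \ref{theominmax1}, adapted to accommodate the Robin exterior condition. Set
\[
M := \|\rho_0\|_{L^\infty(\Omega)}+\|\zeta\|_{L^\infty(Q)}+\|\eta\|_{L^\infty(\Sigma)}
\]
and introduce $w(x,t) := M - z(x,t)$. Because $(-\Delta)^s M = 0$ and $\mathcal{N}_s M = 0$ for the constant $M$, the function $w$ satisfies
\begin{equation*}
\left\{
\begin{array}{rllll}
w_t + (-\Delta)^s w + w &=& M - \zeta & \mbox{in} & Q,\\
\mathcal{N}_s w + w &=& M - \eta & \mbox{in} & \Sigma,\\
w(\cdot, 0) &=& M - \rho_0 & \mbox{in} & \Omega,
\end{array}
\right.
\end{equation*}
with all three pieces of data non-negative. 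Proving \eqref{minmax12} is therefore equivalent to showing that $w^- := \max(0,-w) = (z-M)^+$ vanishes a.e.\ in $\R^N \times [0,T]$. Note that $w^-(\cdot, 0) = (\rho_0 - M)^+ = 0$ a.e.\ in $\Omega$ and that $w^- \in W(0,T; \H^s_\Omega)$ by the standard stability of the positive-part operation on fractional Sobolev spaces.

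The main computation is to test the above system against $w^-(\cdot,t) \in \H^s_\Omega$. Using the integration-by-parts formula \eqref{Int-Part}, the chain-rule identity
\[
\langle w_t(t),w^-(t)\rangle_{(\H^s_\Omega)^\star,\H^s_\Omega} = -\frac{1}{2}\frac{d}{dt}\|w^-(t)\|^2_{L^2(\Omega)},
\]
and the pointwise relations $w\, w^- = -(w^-)^2$ on $\Omega$ and on $\R^N \setminus \Omega$, I arrive at
\begin{equation*}
-\frac{1}{2}\frac{d}{dt}\|w^-(t)\|^2_{L^2(\Omega)} + \mathcal{F}_R(w(t),w^-(t)) - \|w^-(t)\|^2_{L^2(\Omega)} - \|w^-(t)\|^2_{L^2(\Omc)} = \int_\Omega (M-\zeta)w^-\,dx + \int_{\Omc}(M-\eta)w^-\,dx.
\end{equation*}
By inequality \eqref{ineq}, $\mathcal{F}_R(w,w^-) \leq 0$, while the right-hand side is non-negative since $M-\zeta \geq 0$, $M-\eta \geq 0$, and $w^- \geq 0$. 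Rearranging yields
\[
\tfrac{1}{2}\tfrac{d}{dt}\|w^-(t)\|^2_{L^2(\Omega)} + \|w^-(t)\|^2_{L^2(\Omega)} + \|w^-(t)\|^2_{L^2(\Omc)} \leq 0.
\]

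Combined with $w^-(\cdot,0) = 0$ a.e.\ in $\Omega$, Gronwall's lemma (exactly as in the proof of Theorem \ref{theominmax1}) forces $\|w^-(t)\|^2_{L^2(\Omega)} \equiv 0$, and the same inequality simultaneously forces $\|w^-(t)\|^2_{L^2(\Omc)} \equiv 0$; hence $w^- \equiv 0$ a.e.\ in $\R^N \times [0,T]$, which is \eqref{minmax12}. The only genuinely delicate point is that the constant $M$ is not in $L^2(\R^N)$, so $w$ is not literally an element of $\H^s_\Omega$. This is bypassed in the same way as in the Dirichlet proof of Theorem \ref{theominmax1}: interpret the equation for $w$ distributionally and use only $w^- = (z-M)^+$ as the admissible test function in the weak formulation for $z$, noting that $M$ drops out of $\mathcal{F}_R$ because $M(x)-M(y)=0$, while the extra contributions $M\int_\Omega (z-M)^+\,dx$ and $M\int_{\Omc}(z-M)^+\,dx$ combine with $\int \zeta(z-M)^+$ and $\int \eta(z-M)^+$ to produce precisely the non-positive right-hand side above.
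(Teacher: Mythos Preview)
Your proof is correct and follows essentially the same approach as the paper: set $w = M - z$, test the resulting system against $w^-$, and use \eqref{ineq} together with the non-negativity of the data to obtain a differential inequality that forces $w^- \equiv 0$. Your version is in fact slightly cleaner than the paper's in two respects: you retain the term $\|w^-(t)\|^2_{L^2(\Omc)}$ in the final inequality, which directly yields $w^- = 0$ on $\Sigma$ (the paper simply asserts ``since $w^- = 0$ in $\Sigma$'' without justification, a step that is immediate in the Dirichlet case but not in the Robin case), and you explicitly address the technicality that the constant $M$ does not lie in $\H^s_\Omega$, which the paper passes over.
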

	\begin{proof}
		Set
		$K=\dis \|\rho_0\|_{L^\infty(\Omega)}+\|\zeta\|_{L^\infty(Q)}+\|\eta\|_{L^\infty(\Sigma)}$ and $w(x,t)=K-z(x,t)$ for any $(x,t)\in \R^N \times [0,T]$. Then
		\begin{subequations}\label{eq}
			\begin{alignat}{11}
			K\geq&\; 0,\label{eqajout1}\\
			K\geq &\;\zeta(x,t) \;\;\;\text{for a.e}\; (x,t)\in Q,\label{eqajout2}\\
			K\geq &\;\eta(x,t) \;\;\;\text{for a.e}\; (x,t)\in \Sigma,\label{eqajout3}\\
			K\geq&\;\rho_0(x) \;\;\;\text{for a.e} \; \Omega.\label{eqajout4}
			\end{alignat}
		\end{subequations}
		In addition,
		\begin{equation}\label{eqajout5}
		w(x,0)=K-\rho_0(x)\geq 0 \;\hbox{   for  a.e. }x\in \Omega.
		\end{equation}
		Moreover, $w$ satisfies
		\begin{equation}\label{modelwb1}
		\left\{
		\begin{array}{rllll}
		\dis  w_t+(-\Delta)^sw + w&=& K-\zeta \qquad &\mbox{in}& Q,\\
		\mathcal{N}_sw+w&=&K-\eta  &\mbox{in}& \Sigma,\\
		w(.,0)&=& K-\rho_0 &\mbox{in}& \Omega.
		\end{array}
		\right.
		\end{equation}
		For $(x,t)\in \R^N\times [0,T]$, we write  $w(x,t)=w^+(x,t)-w^-(x,t)$, where $w^+(x,t)=\max(w(x,t),0)$ and $w^-(x,t)=\max(0,-w(x,t))$. It is sufficient to show that $w^-(x,t)=0$ for almost every $(x,t)\in \R^N\times [0,T]$. Then  we have that
		$$\begin{array}{llllll}
		w^-w^+&=& 0 \;\; \hbox{a.e. in } \R^N\times [0,T],\\
		w^-(x,0)&=&\max(0,-w(x,0))=0\;\; \hbox{a.e. in } \Omega,\\
		\end{array}$$
		and  $w^-\in W(0,T;\H^s_\Omega).$
		If we take the duality map between \eqref{modelwb1}  and $\psi\in \H^s_{\Omega}$, and use the integration by parts \eqref{Int-Part}, we have for any  $t\in [0,T]$,
		$$
		\begin{array}{llll}
		\dis \dis \langle w_t (t),\psi\rangle_{(\H^s_{\Omega})^*,\H^s_{\Omega}}+\mathcal{F}_R(w(t),\psi)+\int_{\Omega}w(t)\psi \;\dx\\
		=\dis \int_\Omega (K-\zeta(t))\psi \;\dx+\int_{\Omc} (K-\eta(t))\psi \;\dx,
		\end{array}
		$$
		where $\mathcal{F}_R$ is defined as in \eqref{defFR}. Taking $\psi=w^-(t)$, this latter identity gives
		$$
		\begin{array}{llll}
		\dis \dis \langle w_t (t),w^-(t)\rangle_{(\H^s_{\Omega})^*,\H^s_{\Omega}}+\mathcal{F}_R(w(t),w^-(t))+\int_{\Omega}w(t)w^-(t) \;\dx+\int_{\Omc}w(t)w^-(t)\;\dx\\
		\qquad	=\dis \int_\Omega (K-\zeta(t))w^-(t) \;\dx+\int_{\Omc} (K-\eta(t))w^-(t) \;\dx.
		\end{array}
		$$
		Hence,
		\begin{equation}\label{ajout121}
\begin{array}{llll}
		\dis \frac{1}{2}\frac{d}{dt}\|w^-(t) \|^2_{L^2(\Omega)} -\mathcal{F}_R(w(t),w^-(t))-\int_{\Omega}(w^-(t))^2dx-\int_{\Omc}(w^-(t))^2\;\dx\\
		\quad=-\dis \int_\Omega (K-\zeta(t))w^-(t) \;\dx-\int_{\Omc} (K-\eta(t))w^-(t) \;\dx.
		\end{array}
		\end{equation}	
 Next, using \eqref{eqajout2}, \eqref{eqajout3} and  \eqref{ineq}, we deduce from \eqref{ajout121} that
		$$\begin{array}{llll}
		\dis \frac{1}{2}\frac{d}{dt}\|w^-(t) \|^2_{L^2(\Omega)}
		\leq 0.
		\end{array}
		$$
		By using the Gronwall's Lemma it follows that
		$$\|w^-(t) \|^2_{L^2(\Omega)}\leq \|w^-(\cdot,0)\|_{L^2(\Omega)}=0,$$
		because $w^-(x,0)=0$ a.e. in $\Omega$.
		Hence
		$w^-(x,t)=0$ for almost every $(x,t)\in \Omega\times [0,T]$ and since $w^-=0$ in $\Sigma$, it follows that $w^-=0$  almost everywhere  in $\R^N\times [0,T]$. Consequently,  $z\leq K $ almost everywhere  in $\R^N\times [0,T]$ and \eqref{minmax12} holds.	
\end{proof}
\begin{corollary}\label{coro2}
		Let  $f\in L^\infty(Q),$  $g\in L^\infty(\Sigma)$ and $\rho_0\in L^\infty(\Omega)$. Then  we have the following boundedness result for the weak solution of \eqref{modeln}:
		\begin{equation}\label{minmax21}
		\rho(x,t)\leq e^{T}\left(\|\rho_0\|_{L^\infty(\Omega)}+\|f\|_{L^\infty(Q)}+\|g\|_{L^\infty(\Sigma)}\right) \hbox{ a.e. in } \R^N \times [0,T].
		\end{equation}
	\end{corollary}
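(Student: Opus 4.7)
The strategy is to reduce to the auxiliary model \eqref{modelexist1aux} via the exponential change of unknown introduced in the proof of Corollary \ref{exist1}, and then apply Theorem \ref{theominmax12} directly. Concretely, I would set $z(x,t):=e^{-t}\rho(x,t)$ for $(x,t)\in\R^N\times[0,T]$. By Corollary \ref{exist1}, $z\in W(0,T;\H^s_\Omega)$ is the unique weak solution of \eqref{modelexist1aux} with data
\[
\zeta(x,t)=e^{-t}f(x,t),\quad \eta(x,t)=e^{-t}g(x,t),\quad z(\cdot,0)=\rho_0.
\]

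Next I would check that this transformation preserves $L^\infty$-boundedness of the data. Since $0\le e^{-t}\le 1$ for all $t\in[0,T]$, we immediately have $\zeta\in L^\infty(Q)$ with $\|\zeta\|_{L^\infty(Q)}\le\|f\|_{L^\infty(Q)}$, and likewise $\eta\in L^\infty(\Sigma)$ with $\|\eta\|_{L^\infty(\Sigma)}\le\|g\|_{L^\infty(\Sigma)}$, while $\rho_0\in L^\infty(\Omega)$ by hypothesis. Hence the assumptions of Theorem \ref{theominmax12} are fulfilled.

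Applying Theorem \ref{theominmax12} to $z$ then gives, for a.e. $(x,t)\in\R^N\times[0,T]$,
\[
z(x,t)\le \|\rho_0\|_{L^\infty(\Omega)}+\|\zeta\|_{L^\infty(Q)}+\|\eta\|_{L^\infty(\Sigma)}\le \|\rho_0\|_{L^\infty(\Omega)}+\|f\|_{L^\infty(Q)}+\|g\|_{L^\infty(\Sigma)}.
\]
Multiplying both sides by $e^t$ and using the bound $e^t\le e^T$ for $t\in[0,T]$ yields
\[
\rho(x,t)=e^t z(x,t)\le e^T\bigl(\|\rho_0\|_{L^\infty(\Omega)}+\|f\|_{L^\infty(Q)}+\|g\|_{L^\infty(\Sigma)}\bigr)\quad\text{a.e. in }\R^N\times[0,T],
\]
which is exactly \eqref{minmax21}. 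There is no real analytic obstacle here: all the work has already been done in Theorem \ref{theominmax12}, and the role of this corollary is just to transport that bound through the exponential substitution, which is why the factor $e^T$ appears on the right-hand side. The only step one has to be slightly careful about is noticing that both the $L^\infty$-norms of the data and the value of $z$ itself pick up at most the factor $e^T$ under the substitution, so the bound comes out with a single $e^T$ rather than $e^{2T}$ or worse.
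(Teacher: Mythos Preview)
Your proof is correct and follows essentially the same route as the paper: set $z=e^{-t}\rho$, note that $\zeta=e^{-t}f$ and $\eta=e^{-t}g$ lie in $L^\infty$ with norms bounded by those of $f$ and $g$, apply Theorem \ref{theominmax12} to $z$, and then multiply through by $e^{t}\le e^{T}$. If anything, you are slightly more careful than the paper in explicitly recording the inequalities $\|\zeta\|_{L^\infty(Q)}\le\|f\|_{L^\infty(Q)}$ and $\|\eta\|_{L^\infty(\Sigma)}\le\|g\|_{L^\infty(\Sigma)}$.
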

	\begin{proof}
		Let $f\in L^\infty(Q),$ $g\in L^\infty(\Sigma)$ and $\rho$ be the weak solution of \eqref{modeln}. Then $\zeta=e^{-t}f\in L^\infty(Q)$ and $\eta=e^{-t}g\in L^\infty(\Sigma)$.
		Thus, $z=e^{-t}\rho$ is the solution of \eqref{modelexist1aux} associated to data $\zeta$, $\eta$ and $\rho_0$. Therefore, using   Theorem \ref{theominmax12},  we obtain
		\begin{equation*}
		e^{-t}\rho(x,t)\leq \left(\|\rho_0\|_{L^\infty(\Omega)}+\|\zeta\|_{L^\infty(Q)}+\|\eta\|_{L^\infty(\Sigma)}\right) \hbox{ a.e. in } \R^N \times [0,T].
		\end{equation*}
		Hence,
		\begin{equation*}
		e^{-t}\rho(x,t)\leq \left(\|\rho_0\|_{L^\infty(\Omega)}+\|f\|_{L^\infty(Q)}+\|g\|_{L^\infty(\Sigma)}\right) \hbox{ a.e. in } \R^N \times [0,T].
		\end{equation*}
		Therefore,
		\begin{equation}\label{minmax121}
		\rho(x,t)\leq e^{T}\left(\|\rho_0\|_{L^\infty(\Omega)}+\|f\|_{L^\infty(Q)}+\|g\|_{L^\infty(\Sigma)}\right) \hbox{ a.e. in } \R^N \times [0,T].
		\end{equation}
		The proof is finished.
	\end{proof}
	\begin{remark}
		Note that from Corollary \ref{coro2}, we deduce the following
		\begin{equation}\label{max}
		\|\rho\|_{L^\infty(\R^N\times[0,T])}\leq e^{T}\left[\dis \|\rho_0\|_{L^\infty(\Omega)}+\|f\|_{L^\infty(Q)}+\|g\|_{L^\infty(\Sigma)}\right].
		\end{equation}
\end{remark}

\begin{remark}
		We notice that solutions of the systems \eqref{modeln}-\eqref{modeld} can be represented by using the semigroups theory as introduced in \cite{claus2020realization,GW-CPDE}. This could simplify the proofs of the manuscript.
\end{remark}
\section{Conclusion}
\label{conclusion}
In this article, we established  existence results for a class of linear parabolic equations involving the fractional Laplace operator of order $s$ ($0<s<1$). We proved the positiveness  of the solutions under the assumption that the data are positive and established  weak maximum principle for these equations. We also proved that the  solutions of the equations considered in the paper are  bounded in $L^\infty(\R^N\times[0,T])$ if the initial condition is bounded in $L^\infty(\Omega)$ and the source term is bounded in $L^\infty(Q)$.
We believe that the boundedness results obtained in this paper  can, for instance,  contribute significantly to prove the well posedness of some nonlinear systems  involving fractional Laplace operators as well as  the  uniqueness results of optimal control  associated to such  systems.

\bibliographystyle{abbrv}
\bibliography{mybibfile}

\begin{thebibliography}{10}

\bibitem{al2014}
M.~Al-Refai and Y.~Luchko.
\newblock Maximum principle for the fractional diffusion equations with the
  riemann-liouville fractional derivative and its applications.
\newblock {\em Fractional Calculus and Applied Analysis}, 17(2):483--498, 2014.

\bibitem{ATW}
W.~Arendt, A.~F.~M. ter Elst, and M.~Warma.
\newblock {F}ractional powers of sectorial operators via the
  {D}irichlet-to-{N}eumann operator.
\newblock {\em Comm. Partial Differential Equations}, 43(1):1--24, 2018.

\bibitem{barrios2012}
B.~Barrios, E.~Colorado, A.~De~Pablo, and U.~S{\'a}nchez.
\newblock On some critical problems for the fractional laplacian operator.
\newblock {\em Journal of Differential Equations}, 252(11):6133--6162, 2012.

\bibitem{brandle2013}
C.~Br{\"a}ndle, E.~Colorado, A.~de~Pablo, and U.~S{\'a}nchez.
\newblock A concave—convex elliptic problem involving the fractional
  laplacian.
\newblock {\em Proceedings of the Royal Society of Edinburgh Section A:
  Mathematics}, 143(1):39--71, 2013.

\bibitem{cabre2014}
X.~Cabr{\'e} and E.~Cinti.
\newblock Sharp energy estimates for nonlinear fractional diffusion equations.
\newblock {\em Calculus of Variations and Partial Differential Equations},
  49(1):233--269, 2014.

\bibitem{caffarelli2008}
L.~A. Caffarelli, S.~Salsa, and L.~Silvestre.
\newblock Regularity estimates for the solution and the free boundary of the
  obstacle problem for the fractional laplacian.
\newblock {\em Inventiones mathematicae}, 171(2):425--461, 2008.

\bibitem{caffarelli2010}
L.~A. Caffarelli and A.~Vasseur.
\newblock Drift diffusion equations with fractional diffusion and the
  quasi-geostrophic equation.
\newblock {\em Annals of Mathematics}, 171(3):1903--1930, 2010.

\bibitem{chechkin2005}
A.~V. Chechkin, R.~Gorenflo, and I.~M. Sokolov.
\newblock Fractional diffusion in inhomogeneous media.
\newblock {\em Journal of Physics A: Mathematical and General}, 38(42):L679,
  2005.

\bibitem{claus2020realization}
B.~Claus and M.~Warma.
\newblock Realization of the fractional laplacian with nonlocal exterior
  conditions via forms method.
\newblock {\em Journal of Evolution Equations}, 20(4):1597--1631, 2020.

\bibitem{NPV}
E.~Di~Nezza, G.~Palatucci, and E.~Valdinoci.
\newblock Hitchhiker's guide to the fractional {S}obolev spaces.
\newblock {\em Bull. Sci. Math.}, 136(5):289--307, 2012.

\bibitem{Dipierro2017}
S.~Dipierro, X.~Ros-Oton, and E.~Valdinoci.
\newblock Nonlocal problems with {N}eumann boundary conditions.
\newblock {\em Rev. Mat. Iberoam}, 33(2):377--416, 2017.

\bibitem{freed2002}
A.~Freed, K.~Diethelm, and Y.~Luchko.
\newblock Fractional-order viscoelasticity (fov): Constitutive development
  using the fractional calculus: First annual report.
\newblock {\em First annual report (No. NAS 1.15: 211914)}, 2002.

\bibitem{GW-CPDE}
C.~G. Gal and M.~Warma.
\newblock Nonlocal transmission problems with fractional diffusion and boundary
  conditions on non-smooth interfaces.
\newblock {\em Comm. Partial Differential Equations1}, 42(4):579--625, 2017.

\bibitem{hilfer2000}
R.~Hilfer.
\newblock {\em Applications of fractional calculus in physics}.
\newblock World scientific, 2000.

\bibitem{kilbas2006}
A.~A. Kilbas, H.~M. Srivastava, and J.~J. Trujillo.
\newblock {\em Theory and applications of fractional differential equations},
  volume 204.
\newblock elsevier, 2006.

\bibitem{lions1971}
J.~Lions.
\newblock {\em Optimal control of systems governed partial differential
  equations}.
\newblock Springer, NY, 1971.

\bibitem{lions2013}
J.~L. Lions.
\newblock {\em Equations diff{\'e}rentielles op{\'e}rationnelles: et
  probl{\`e}mes aux limites}, volume 111.
\newblock Springer-Verlag, 2013.

\bibitem{luchko2009}
Y.~Luchko.
\newblock Maximum principle for the generalized time-fractional diffusion
  equation.
\newblock {\em Journal of Mathematical Analysis and Applications},
  351(1):218--223, 2009.

\bibitem{luchk2010}
Y.~Luchko.
\newblock Some uniqueness and existence results for the initial-boundary-value
  problems for the generalized time-fractional diffusion equation.
\newblock {\em Computers \& Mathematics with Applications}, 59(5):1766--1772,
  2010.

\bibitem{metzler2000}
R.~Metzler and J.~Klafter.
\newblock The random walk's guide to anomalous diffusion: a fractional dynamics
  approach.
\newblock {\em Physics reports}, 339(1):1--77, 2000.

\bibitem{milchev2009}
A.~Milchev, J.~L. Dubbeldam, V.~G. Rostiashvili, and T.~A. Vilgis.
\newblock Polymer translocation through a nanopore: A showcase of anomalous
  diffusion.
\newblock {\em Annals of the New York Academy of Sciences}, 1161(1):95--104,
  2009.

\bibitem{musina2019}
R.~Musina and A.~I. Nazarov.
\newblock Strong maximum principles for fractional laplacians.
\newblock {\em Proceedings of the Royal Society of Edinburgh Section A:
  Mathematics}, 149(5):1223--1240, 2019.

\bibitem{podlubny1998}
I.~Podlubny.
\newblock {\em Fractional differential equations: an introduction to fractional
  derivatives, fractional differential equations, to methods of their solution
  and some of their applications}.
\newblock Elsevier, 1998.

\bibitem{servadei}
R.~Servadei and E.~Valdinoci.
\newblock Mountain {P}ass solutions for non-local elliptic operators.
\newblock {\em J. Math. Anal. Appl.}, 389:887--898, 2012.

\bibitem{silvestre2007}
L.~Silvestre.
\newblock Regularity of the obstacle problem for a fractional power of the
  laplace operator.
\newblock {\em Communications on Pure and Applied Mathematics: A Journal Issued
  by the Courant Institute of Mathematical Sciences}, 60(1):67--112, 2007.

\bibitem{trong2021}
N.~N. Trong, B.~L.~T. Thanh, et~al.
\newblock On the strong maximum principle for a fractional laplacian.
\newblock {\em Archiv der Mathematik}, pages 1--11, 2021.

\bibitem{viet2019}
T.~Q. Viet, N.~M. Dien, and D.~D. Trong.
\newblock Stability of solutions of a class of nonlinear fractional laplacian
  parabolic problems.
\newblock {\em Journal of Computational and Applied Mathematics}, 355:51--76,
  2019.

\bibitem{wang2020}
G.~Wang, X.~Ren, and D.~Baleanu.
\newblock Maximum principle for hadamard fractional differential equations
  involving fractional laplace operator.
\newblock {\em Mathematical Methods in the Applied Sciences}, 43(5):2646--2655,
  2020.

\bibitem{War}
M.~Warma.
\newblock The fractional relative capacity and the fractional {L}aplacian with
  {N}eumann and {R}obin boundary conditions on open sets.
\newblock {\em Potential Anal.}, 42(2):499--547, 2015.

\bibitem{War-ACE}
M.~Warma.
\newblock Approximate controllability from the exterior of space-time
  fractional diffusive equations.
\newblock {\em SIAM J. Control Optim.}, 57(3):2037--2063, 2019.

\end{thebibliography}
\end{document}